\providecommand{\U}[1]{\protect\rule{.1in}{.1in}}
\providecommand{\U}[1]{\protect\rule{.1in}{.1in}}
\providecommand{\U}[1]{\protect\rule{.1in}{.1in}}
\newtheorem{remark}{Remark}
\newtheorem{proposition}{Proposition}
\newtheorem{definition}{Definition}
\newtheorem{theorem}{Theorem}
\begin{document}
\title{Human behaviors: a threat for mosquito control?}
\author[$^{\dagger}$]{Yves Dumont\thanks{E-mail: \href{yves.dumont@cirad.fr}{yves.dumont@cirad.fr}}}
\author[$^{\star}$]{Josselin Thuilliez \thanks{E-mail:\href{josselin.thuilliez@univ-paris1.fr}{josselin.thuilliez@univ-paris1.fr}}}
\affil[$^{\dagger}$]{ CIRAD UMR AMAP. TA A51/PS2 34398 Montpellier cedex 5.} 
\affil[$^{\star}$]{CNRS - Paris 1 Panth\'{e}on Sorbonne University. Centre d'\'economie de la Sorbonne. 106-112, Boulevard de l'H\^{o}pital. 75013 Paris
}

\maketitle
\begin{abstract} \linespread{1}
Community involvement and the preventive behavior of households are considered to be at the heart of vector-control strategies. In this work, we consider a simple theoretical model that enables us to take into account human behaviors that may interfere with vector control. The model reflects the trade-off between perceived costs and observed efficacy. Our theoretical results emphasize that households may reduce their protective behavior in response to mechanical elimination techniques piloted by a public agent, leading to an increase of the total number of mosquitoes in the surrounding environment and generating a barrier for vector-borne diseases control. Our study is sufficiently generic to be applied to different arboviral diseases. It also shows that vector-control models and strategies have to take into account human behaviors.

\bigskip
\noindent\textit{\textbf{Keywords}: Population dynamics, Mathematical Modelling, Cooperative System,  Impulsive Differential Equation, Periodic Equilibrium, Vector control, Human Behaviours, Numerical Simulations}

\bigskip
\noindent \textit{\textbf{AMS}}: 34C12, 91BXX, 92C60
\end{abstract}

\renewcommand{\thefootnote}{\roman{footnote}} 
\newpage
\setcounter{page}{1}
\setcounter{footnote}{0} 
\renewcommand{\thefootnote}{\arabic{footnote}} 
\section{Introduction}
Many preventive technologies are among the most inexpensive ways to promote good health and are often cheaper than curative healthcare. However, how people make decisions about the use of low-cost preventive technologies remains unclear. Education may influence the adoption of health technologies or counteract the natural tendency to do the wrong thing \cite{kremer_illusion_2007, adhvaryu_learning_2014, dupas_short-run_2014}. The returns to adoption could be an additional explanation \cite{banerjee_improving_2010, adhvaryu_learning_2014}. The unobserved characteristics of adopters may account substantially for the value assigned to protective or curative measures \cite{geoffard_rational_1996, chan_learning_2006, lamiraud_therapeutic_2007}. For instance, in the case of vector-borne diseases such as malaria, unobserved reasons for not using insecticide treated nets (such as personal beliefs, perceived temperature, smell, number of mosquitoes or comfort) have often been presented as evidence to explain the low adoption or use of nets. However health care researchers have traditionally regarded such reasons as the result of irrational, misinformed or subjective behavior.

In this work, we study a related process at the heart of community-based healthcare strategies, with an emphasis on vector-control programs\footnote{Vector control is any method to limit or eradicate the mammals, birds, insects or other arthropods which transmit disease pathogens.}: community involvement and households' protective behaviors in the context of a public intervention. We focus on mechanical elimination techniques which, in this study, refer primarily to the elimination of breeding sites to reduce the mosquito population around the house. Mechanical elimination is among the cheapest interventions described above and can therefore result in high monetary returns. Such methods have been recommended by the World Health Organization (WHO) for the control of arboviral diseases or malaria (for exophagic/exophilic \textit{aedes} or \textit{anopheles} that are best controlled through the destruction of breeding sites) and can be implemented either by an external agency or directly by households in their private dwellings \cite{organization_dengue_2009}. Mechanical control has also been modelled in \cite{dumont_temporal_2008,dumont_vector_2010,dumont_mathematical_2012} to study its impact on the \textit{Aedes albopictus} population and on the epidemiological risk. We argue that eliminating breeding sites is a choice made by the household and, consequently, adoption rates reflect the household’s trade-off between perceived costs and observed efficacy. In addition, we show that a mechanical elimination intervention piloted by an external agent might act as a substitute for private protection rather than a complement. Furthermore, if the intervention induces a psychological effect (a perceived improvement in safety or well-being), community (aggregate) protection level might be lower in the intervention group. As a result, those who received the intervention may be worse off than if they had not received it. 

Based on previous works, done by the authors, our study focuses on \textit{Aedes} species, and in particular \textit{Aedes albopictus} \cite{benedict_spread_2007}, also called the tiger mosquito. It is particularly threatening due to its potential for transmitting a wide range of arboviruses, including dengue, chikungunya, yellow fever, and several other types of encephalitides  \cite{gratz_critical_2004, angelini_chikungunya_2008, delatte_blood-feeding_2010, paupy_comparative_2010}. \textit{A. albopictus} is now well established in places where the socioeconomic level of the population is high (in Southern Europe, for instance). Education levels are also high, as is knowledge of disease transmission and recommended practices specific to the elimination of larval sites. Though our study is applicable to risk from arboviral diseases\footnote{There are no specific antiviral medicines or vaccines against Chikungunya and Dengue.} and \textit{Aedes} mosquitoes, it holds implications for other vector-borne diseases control, like malaria. 

Numerical simulations of our model focus on Réunion Island. Réunion is one of the places in the world that has experienced a number of epidemics due to the favourable environment it provides for the mosquito species to thrive. Past outbreaks of chikungunya and dengue prompted authorities on the island to implement strategies to control mosquito density. Indeed, since the resurgence of dengue in 2004, and the major chikungunya outbreak in 2005-2006, Health authorities have set up entomologic surveillance of \textit{Aedes albopictus} in all urban areas. This surveillance effort still continues today through the monitoring of traditional stegomyia indices at immature stages (i.e. Container Index, House Index, Breteau Index)\footnote{The house index is defined as the percentage of houses infested by larvae and/or pupae. The container index is defined as the percentage of water-holding containers with active immature stages of mosquitoes. The Breteau index is defined as the number of positive containers per 100 houses, a positive container being one that contains larval and/or pupal stages of mosquito.} as are used in other control programmes \cite{pierre_dengue_2006}.  \textit{Aedes albopictus} remains the main target of the work of the LAV (\textit{Lutte anti-vectorielle}), a service which is organised by the Regional Health Agency in Réunion.  The vector-control strategy integrates five core activities: vector surveillance, environmental, mechanical, and chemical control (larvicide being applied rarely), and public health education campaigns. Vector control services also undertake the early detection and treatment of cases of arboviral infection to prevent the spread of new epidemics. 

Given the investment of both financial and human resources toward the control of \textit{Aedes albopictus} and the observed rise of \textit{Aedes albopictus} density in Réunion from 2006 to 2011 despite public action \cite{boyer_spatial_2014}, we begin by modeling the household decision to eliminate larval gites before providing an experimental test of the theory. While it fits to the most recent bio-mathematical models applied to vector-control, the model differs from a mere bio-mathematical approach and accounts for selfish externalities and human behaviors, reflecting the trade-off between perceived costs and observed efficacy. We show that the elimination of larval habitats by an external public agent may increase the mosquito population weeks (or months) after the intervention. This situation corresponds to one of the numerical simulations of the model (that uses realistic parameters on the island).  

The rest of this paper is organized as follows.  In section \ref{sec:2} we present our Mosquito model and provide some qualitative results. In section \ref{sec:3}, we include Individual behaviors in the entomological model. Then, in section in \ref{sec:4}, we provide numerical simulations and we discuss several scenarii. Section \ref{sec:5} concludes.

\section{The Mosquito Model} \label{sec:2}

Before combining individual behaviors with an entomological model, we will first consider the following mosquito model, based on models developed and studied in \cite{dumont_mathematical_2012} (see also the related epidemiological models \cite{dumont_temporal_2008, dumont_vector_2010,rodrigues_dengue_2012, manore_comparing_2014}, and references therein)
	
 \begin{equation}
\left\{ \begin{array}{l}
\dfrac{dL_{v}}{dt}=rbA_{v}\left(1-\dfrac{L_{v}}{K_{v}}\right)-\left(\nu_{L}+\mu_{L}\right)L_{v},\\
\dfrac{dA_{v}}{dt}=\nu_{L}L_{v}-\mu_{v}A_{v}, \\
L_v(0)=L_0, \\
A_v(0)=A_0,
\end{array}\right.
\label{systemA_N}
    \end{equation}
where $A_v$ represents the adult mosquito population, and $L_v$, the "aquatic" (including eggs, larvae, and pupae) population. The (biological) parameters of the models are described as follows: $r$ is the sex ratio, $b$ is the mean number of eggs laid by a female mosquito per day that have emerged as larvae, $K_{v}$ is the maximal breeding capacity, $1/(\nu_{L}+\mu_{L})$ is the mean time a mosquito stays in the aquatic stage, $\mu_{L}$ is the aquatic daily death-rate, $\mu_{v}$ is the female mosquito mean death-rate per day.

The right-hand side of system (\ref{systemA_N}) is a continuously differentiable map ($\mathcal{C}^{1}$). Then, by the Cauchy-Lipschitz theorem, system (\ref{systemA_N}) provides a unique maximal solution.
System (\ref{systemA_N}) is biologically well posed: if the initial data are in $\mathbb{R}^2_+$, then the solution stays in $\mathbb{R}^2_+$: $L_v=0$, and $A_v=0$ are vertical and horizontal null lines respectively. Thus, no trajectory can cut these axes.
In fact, it is straighforward to show that the compact 
$$
\mathcal{K}=\left\{ (L_v,A_v)\in \mathbb{R}^2_+: L_v\leq K_v,A_v\leq \dfrac{\nu_L}{\mu_v} K_v \right\}
$$
is positively invariant by (\ref{systemA_N}).
 
Let us now give some qualitative results on system (\ref{systemA_N}). Let \begin{equation}
\mathcal{N}=\frac{\nu_{L}rb}{(\nu_{L}+\mu_{L})\mu_{v}},
\label{N}
\end{equation}
the basic offspring number\footnote{The basic offspring number represents the average number of offsprings produced over the lifetime of an individual (under ideal conditions)}. First, we can show that there exist two possible equilibria:
\begin{itemize}
\item a trivial equilibrium $\mathbf{0}=(0,0)$
\item a positive equilibrium $\mathbf{E}=(L_v^*,A_v^*)$, when $\mathcal{N}>1$, defined as follows:
 \begin{equation}
\left\{
\begin{array}{l}
L_v^{*},=\left(1-\dfrac{1}{\mathcal{N}}\right)K_{v}, \\
A_v^*=\dfrac{\nu_{L}}{\mu_{v}}\left(1-\dfrac{1}{\mathcal{N}}\right)K_{v}.
\end{array}
\right.
\label{Eq}
     \end{equation}
     $\mathbf{E}$ belongs to $\mathcal{K}$, and, thus, is biologically realistic, i.e. it is positive and bounded.
\end{itemize}
The dynamic of model (\ref{systemA_N}) can be summarized by the following proposition:

\begin{proposition} Assume that $(L_0,A_0)\in \mathcal{K}$.
\label{propo1}
\begin{itemize}
\item When $\mathcal{N}\leq 1$, the trivial equilibrium $\mathbf{0}$ is globally asymptotically stable, which means that the mosquito population will dwindle until extinction, whatever the initial population.
\item When $\mathcal{N}>1$, the trivial equilibrium is unstable and the  positive equilibrium $\mathbf{E}$ is globally asymptotically stable, which means that the mosquito population persists.
\end{itemize}
\end{proposition}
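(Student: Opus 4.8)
The plan is to exploit the fact that system~(\ref{systemA_N}) is a planar \emph{cooperative} (monotone) system on the invariant box $\mathcal{K}$, so that trajectories are eventually monotone and the Poincar\'e--Bendixson theory for cooperative systems applies: no periodic orbits, and every bounded trajectory converges to an equilibrium. First I would verify cooperativity: the off-diagonal Jacobian entries are $\partial_{A_v}(dL_v/dt)=rb(1-L_v/K_v)\ge 0$ on $\mathcal{K}$ and $\partial_{L_v}(dA_v/dt)=\nu_L>0$, so the system is cooperative and irreducible on the interior of $\mathcal{K}$. Since $\mathcal{K}$ is compact and positively invariant, $\omega$-limit sets are nonempty, compact, connected subsets of $\mathcal{K}$; by the theory of monotone flows in the plane (Hirsch), each such $\omega$-limit set is a single equilibrium. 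Hence it only remains to decide, according to $\mathcal{N}$, which equilibrium attracts which initial data.

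For the case $\mathcal{N}\le 1$ I would show $\mathbf{0}$ is globally attracting by a direct Lyapunov argument rather than a case analysis. A natural candidate is $V(L_v,A_v)=\nu_L L_v+(\nu_L+\mu_L)A_v$ or, more cleanly, $V=\alpha L_v+A_v$ with $\alpha$ chosen so that the linear part is dissipative; differentiating along~(\ref{systemA_N}) and using $1-L_v/K_v\le 1$ gives $\dot V\le \big(\text{something}\big)\cdot(\nu_L+\mu_L)(\mathcal{N}-1)L_v\le 0$, with equality only at $\mathbf{0}$ when $\mathcal{N}<1$; the borderline $\mathcal{N}=1$ is handled by LaSalle's invariance principle, noting that the largest invariant set in $\{\dot V=0\}$ is $\{\mathbf{0}\}$. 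Together with local linearized stability of $\mathbf{0}$ (the Jacobian at $\mathbf{0}$ has trace $-(\nu_L+\mu_L)-\mu_v<0$ and determinant $(\nu_L+\mu_L)\mu_v(1-\mathcal{N})\ge 0$), this yields global asymptotic stability on $\mathcal{K}$.

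For $\mathcal{N}>1$ I would first record that the Jacobian at $\mathbf{0}$ then has negative determinant, so $\mathbf{0}$ is a saddle, hence unstable; its stable manifold is one-dimensional and, by positivity, cannot meet the interior of $\mathcal{K}$, so no interior trajectory converges to $\mathbf{0}$. Every interior trajectory therefore has $\omega$-limit set equal to an interior equilibrium, and $\mathbf{E}$ is the only one; thus $\mathbf{E}$ attracts all of $\mathrm{int}\,\mathcal{K}$, and a short separate check (monotone convergence along the axes, then entering the interior) extends this to all of $\mathcal{K}\setminus\{\mathbf{0}\}$. Local asymptotic stability of $\mathbf{E}$ follows from the sign of the Jacobian there (negative trace, positive determinant), so $\mathbf{E}$ is globally asymptotically stable on $\mathcal{K}\setminus\{\mathbf{0}\}$.

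The main obstacle is not any single estimate but making the monotone-systems machinery rigorous in the degenerate directions: cooperativity is only strict in the interior of $\mathcal{K}$ (the term $1-L_v/K_v$ vanishes on $\{L_v=K_v\}$), and on the boundary $\{L_v=0\}\cup\{A_v=0\}$ the flow is not order-related to interior points in the naive way. I expect to spend most of the effort showing that trajectories starting on the boundary immediately enter $\mathrm{int}\,\mathcal{K}$ (using the null-cline structure already noted in the excerpt) so that the strong monotonicity results apply after an arbitrarily small time shift; alternatively, one can bypass monotone-systems theory entirely and close the argument with Dulac's criterion (the factor $1/L_v$ or $1/(L_vA_v)$ should eliminate periodic orbits) together with Poincar\'e--Bendixson, which may in fact be the shorter route to write out in full.
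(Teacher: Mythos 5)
Your proposal is correct, but it closes the argument by a genuinely different route than the paper. The paper also starts from cooperativity of (\ref{systemA_N}) on $\mathcal{K}$, but then invokes a single ready-made result (Theorem~6 of Anguelov et al.) on order intervals $[\mathbf{a},\mathbf{b}]$ with $f(\mathbf{a})\geq 0$ and $f(\mathbf{b})\leq 0$: for $\mathcal{N}\leq 1$ it takes $\mathbf{a}=\mathbf{0}$ and $\mathbf{b}=(K_v,\tfrac{\nu_L}{\mu_v\mathcal{N}}K_v)$ to get global stability of $\mathbf{0}$, and for $\mathcal{N}>1$ it constructs a small sub-equilibrium $\mathbf{a}_\varepsilon>\mathbf{0}$ with $f(\mathbf{a}_\varepsilon)>\mathbf{0}$ and lets $\varepsilon\to 0$ to show $\mathbf{E}$ attracts $\mathcal{K}\setminus\{\mathbf{0}\}$; instability of $\mathbf{0}$ falls out as a corollary. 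You instead assemble the conclusion from elementary pieces: planar-cooperative (Hirsch/Poincar\'e--Bendixson) convergence of all bounded orbits to equilibria, an explicit Lyapunov function $V=\alpha L_v+A_v$ with $\alpha=\mu_v/(rb)$ plus LaSalle for $\mathcal{N}\leq 1$ (your computation $\dot V\leq \tfrac{\mu_v(\nu_L+\mu_L)}{rb}(\mathcal{N}-1)L_v$ is exactly right), and a saddle-point/stable-manifold argument at $\mathbf{0}$ for $\mathcal{N}>1$. Your version is more self-contained and does not lean on the cited theorem; its costs are (i) the step ``by positivity the stable manifold cannot meet the interior'' needs the explicit observation that the eigenvector of the negative eigenvalue of $D f(\mathbf{0})$ has components of opposite signs (this follows from the positive off-diagonal entries), and (ii) the whole strategy is confined to the plane, whereas the paper's sub/super-equilibrium argument would survive in higher-dimensional extensions of the model. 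Your worry about strict cooperativity failing on $\{L_v=K_v\}$ is harmless: the planar convergence theorem needs only quasimonotonicity, not irreducibility, and boundary orbits with $L_v=0$ or $A_v=0$ (and not both zero) enter the open quadrant immediately, as you note.
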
     

\begin{proof}
We will use some results related to cooperative systems. These type of systems are very important and well-known in Biology and Ecology, for which many useful theoretical results have been proved \cite{smith_monotone_2008}. A general definition is

\begin{definition}
\label{defCoop} The system $\dot{x}=f(x)$ is called \emph{cooperative} if for every $i,j\in\{1,2,...,n\}$ such that $i\neq j$ the function $%
f_i(x_1,...,x_n)$ is monotone increasing with respect to $x_j$.
\end{definition}

It is also easy to verify that system (\ref{systemA_N}) is a cooperative system in $\mathcal{K}$. To show both GAS results, we will use a theorem proved in \cite[Theorem 6, page 376]{anguelov_mathematical_2012}.
\begin{itemize}
\item When $\mathcal{N}\leq 1$,  system (\ref{systemA_N}) has only the trivial equilibrium $\mathbf{0}$. Thus taking $\mathbf{a}=\mathbf{0}$ and $\mathbf{b}=(K_v,\dfrac{\nu_L}{\mu_v \mathcal{N}}K_v)$, we have $f(\mathbf{a})=0$ and $f(\mathbf{b})<0$. It follows from Theorem 6 (\cite{anguelov_mathematical_2012}), that $\mathbf{0}$ is globally asymptotically stable on $[\mathbf{0},\mathbf{b}]$, hence on $\mathcal{K}$, when  $\mathcal{N} \leq 1$.
\item When $\mathcal{N}>1$, there exists $\varepsilon>0$ such that $\mathcal{N}>1+\varepsilon$. Let $L_{\varepsilon}$ sufficiently small such that
$$
\begin{array}{l}
L_{\varepsilon}\leq \varepsilon,\\
A_{\varepsilon}=\dfrac{\nu_L\left(1+\varepsilon \right)}{\mu_v \mathcal{N}} L_{\varepsilon} \leq \varepsilon
\end{array}
$$
Let $\mathbf{a}_\varepsilon=(L_{\varepsilon},A_\varepsilon)^T$. Then, from the right-hand side of (\ref{systemA_N}) and the fact that $\mathcal{N}>1+\varepsilon$ and $K_v>>1$, we deduce
\begin{equation}
f(\mathbf{a}_\varepsilon)\geq \left(
\begin{tabular}{c}
$(\nu_{L}+\mu_{L})\varepsilon\left(1-\dfrac{1+\varepsilon}{K_v}\right)L_\varepsilon$ \\
$\nu_{L}\left(1-\dfrac{1+\varepsilon}{\mathcal{N}}\right) L_\varepsilon$%
\end{tabular}
\right)\ > \ \mathbf{0}.
\end{equation}
Hence it follows from Theorem 6 \cite{anguelov_mathematical_2012} that equilibrium  $\mathbf{E}=(L_v^*,A_v^*)^T$ is globally asymptotically stable on $[\mathbf{a}_\varepsilon,\mathbf{b}]$. Since $\mathbf{a}_\varepsilon$ can be selected to be smaller than any $x>\mathbf{0}$, we have that $\mathbf{E}$ is asymptotically stable on $\mathcal{K}$ with basin of
attraction $\tilde{\mathcal{K}}=\mathcal{K}\setminus\mathbf{0}$. This also implies that $\mathbf{0}$ is unstable and conclude the proof.
\end{itemize}
\end{proof}

It is also obvious that the mosquito population is related to the maximum larvae or aquatic capacity $K_{v}$. Thus, reducing $K_{v}$  will decrease the mosquito population. This explains why mechanical control (i.e. the action of reducing all potential breeding sites) is very important. Of course it is well known that $K_v$ may change according to the environment \cite{dufourd_impact_2013}.
 
\section{Economic entomological model with protection}
\label{sec:3}
For the rest of the section, we assume that $\mathcal{N}>1$. Let us now consider that individuals use either mechanical control or do not use mechanical control. Assuming that a proportion $H$ of people agree to consider mechanical control, we suppose that the efficiency of the mechanical control is related to $H$ such that, the decrease in $K$ becomes more important as more people practice mechanical control. Thus, when mechanical control occurs, the aquatic carrying capacity decays according to the following law: 
	 \begin{equation}
	K_v=(1-\gamma(H))K_{v},
      \end{equation}
where $\gamma$ is defined such that $\gamma \in[0,1]$ is a non-decreasing function, with $\gamma(0)=0$ and $0<\gamma(1)<1$ , which means that we cannot destroy all breeding sites, even if the whole population were to participate, $H=1$.
The efficacy $\gamma$ can depend on several parameters like environmental parameters, such as rainfall or the quality of mechanical control. Thus $\gamma$ can be defined as a linear function, like
	 \begin{equation}
\gamma(x)=ax, \quad \mbox{ with }a\in]0,1[ 
      \end{equation}
or as a sigmoid-shape function
	 \begin{equation}
 \gamma(x)=\frac{ax}{1+ax},\quad\mbox{with }a>0,
       \end{equation}
where $a$ is a parameter that can be related to the rate of efficacy.

Note also that the larvae capacity $K_v$   does not remain constant, even after intervention. Thus, $K_v$ tries to reach $K_{\max}$, the maximal larvae capacity. The carrying capacity varies according to environmental parameters, and we don't know exactly how to model it. That is why several empirical models could be considered. Among them, we choose one of the simplest. Let $r_K$ a parameter related to the growth rate. For instance, if we assume that the environment is favorable (due to regular rainfall, for instance), then $r_K$ can be large such that $K_v$ grows rapidly to $K_{\max}$, that can be choosen very large. In contrast, when the environment is not favorable (drought or dry season), then $r_K<<1$ (and even equal to $0$) and $K_{\max}$ can be choosen very low. Altogether, we can consider the following growth equation that features the previous properties according to the sign of $r_K$:
\begin{equation}
\left\{ 
\begin{array}{l}
\dfrac{dK_v}{dt}=r_{K}\left(K_{\max}-K_v\right),\\
K_v(0)=K_{0}
\end{array}\right.
\label{KGrowth}
\end{equation}
Thus, assuming that $r_K>0$, since $K_V(t)=K_{\max}-\left(K_0-K_{\max}\right)e^{-r_Kt}$, $K_v$ will converge (rapidly or not, depending on $r_K$) to the maximum larvae capacity, $K_{\max}$.
\begin{remark}
According to environmental parameters, like temperature and rainfall, $K_{\max}$ and $r_K$ can be chosen time-dependant, which implies that
$$
K(t)=K_{\max}(t)+(K_0-K_{\max}(t))e^{-\int_0^tr_K(\tau)d\tau}.
$$
\end{remark}
If mechanical control occurs on day $t_m$, then we consider this control as ``instantaneous'' (say, it is done in one day). This can be modeled as follows:
\begin{equation}
K_v(t_m^+)=(1-\gamma(H(t_m))K_{v}(t_m).
\end{equation}
Thus assuming that mechanical control starts on day $t_0$ and occurs every $\tau$ days, then, we obtain the following impulsive differential equation
\begin{equation}
\left\{ \begin{array}{l}
\dfrac{dK_v}{dt}=r_{K}\left(K_{\max}-K_v\right),\\
\Delta K_v(t_0+n\tau)=-\gamma(H(t_0+n\tau))K_{v}(t_0+n\tau), \quad n=0,1,2,..., \\
K_v(0)=K_{0},
\end{array}\right.
\label{Impulsive}
\end{equation}
where $\Delta x(t)=x(t^+)-x(t)$.
According to the theory of impulsive differential equations \cite{bainov_impulsive_1993}, equation (\ref{Impulsive}) is well defined and has a unique positive solution. If $H$ is assumed to be constant, i.e. $H(t)=H_0$ then, using straightforward computations, it is possible to show the following results. 

\begin{proposition} 
\label{propPeriodiqueK}
Equation (\ref{Impulsive}) admits the following periodic equilibrium $K_{v,per}$ :
\begin{equation}
K_{v,per}(t)=\left(1-\dfrac{\gamma(H_0)e^{-r_K(t-n\tau)}}{1-(1-\gamma(H_0))e^{-r_K\tau}} \right)K_{\max}, \quad t\in [n\tau, (n+1)\tau[ \mbox{ and } n=0,1,2,... 
\end{equation}
\end{proposition}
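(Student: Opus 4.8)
The plan is to reduce the impulsive problem to a scalar affine recurrence on the sequence of post-impulse values and to identify its unique fixed point. After the harmless time shift $t_0=0$ and using that $H\equiv H_0$ is constant, the impulse condition in (\ref{Impulsive}) reads $K_v(n\tau^+)=(1-\gamma(H_0))K_v(n\tau)$.

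First I would integrate the flow between two consecutive impulse times. On each interval $[n\tau,(n+1)\tau)$ the equation $\dot K_v=r_K(K_{\max}-K_v)$ is linear, so, writing $x_n:=K_v(n\tau^+)$, variation of constants gives $K_v(t)=K_{\max}+(x_n-K_{\max})e^{-r_K(t-n\tau)}$ for $t\in[n\tau,(n+1)\tau)$; in particular the left limit at the next impulse is $K_v((n+1)\tau)=K_{\max}+(x_n-K_{\max})e^{-r_K\tau}$. Applying the jump condition then yields the affine recurrence $x_{n+1}=a\,x_n+b$ with $a=(1-\gamma(H_0))e^{-r_K\tau}$ and $b=(1-\gamma(H_0))(1-e^{-r_K\tau})K_{\max}$. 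Because $0<\gamma(H_0)<1$ and $r_K,\tau>0$ we have $0<a<1$, hence the recurrence has a unique fixed point $x^{\star}=b/(1-a)$, and a solution of (\ref{Impulsive}) is $\tau$-periodic exactly when $x_n\equiv x^{\star}$; this choice is admissible, which establishes existence. A short computation gives $x^{\star}=\dfrac{(1-\gamma(H_0))(1-e^{-r_K\tau})}{1-(1-\gamma(H_0))e^{-r_K\tau}}K_{\max}$, and therefore $x^{\star}-K_{\max}=-\dfrac{\gamma(H_0)}{1-(1-\gamma(H_0))e^{-r_K\tau}}K_{\max}$.

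Finally I would substitute $x_n=x^{\star}$ into the interpolation formula of the first step, obtaining, for $t\in[n\tau,(n+1)\tau)$, $K_{v,per}(t)=K_{\max}+(x^{\star}-K_{\max})e^{-r_K(t-n\tau)}=\bigl(1-\tfrac{\gamma(H_0)e^{-r_K(t-n\tau)}}{1-(1-\gamma(H_0))e^{-r_K\tau}}\bigr)K_{\max}$, which is the claimed expression. To close the argument I would check directly that this function solves the ODE on each open interval, and that the prescribed jump $K_{v,per}(n\tau^+)=(1-\gamma(H_0))K_{v,per}(n\tau)$ holds with $K_{v,per}((n+1)\tau^+)=x^{\star}=K_{v,per}((n+1)\tau^+)$ after re-indexing, so that $K_{v,per}$ is genuinely $\tau$-periodic.

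No step is a real obstacle here, since all computations are elementary; the only point requiring a word of care is the strict positivity of the denominator $1-(1-\gamma(H_0))e^{-r_K\tau}$, which holds because $(1-\gamma(H_0))e^{-r_K\tau}<1$. This also shows $K_{v,per}$ stays in $(0,K_{\max}]$, and, since $|a|<1$, that every solution converges to it, although the statement only asserts existence.
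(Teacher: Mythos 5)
Your proof is correct and is exactly the ``straightforward computation'' the paper invokes without writing out: integrate the linear flow between impulses, reduce to the affine stroboscopic map $x_{n+1}=(1-\gamma(H_0))e^{-r_K\tau}x_n+(1-\gamma(H_0))(1-e^{-r_K\tau})K_{\max}$, and substitute its fixed point back into the variation-of-constants formula. Your closing observation that $|a|<1$ forces convergence of every solution to $K_{v,per}$ is also a valid (discrete-map) alternative to the paper's subsequent argument via $y=K_v-K_{v,per}$ for Proposition \ref{asymptoticK}.
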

Then, setting $y=K_v-K_{v,per}$ it is straightforward to show that $y$ is solution of
\begin{equation}
\left\{ \begin{array}{l}
\dfrac{dy}{dt}=-r_{K}y,\\
\Delta y(t_0+n\tau)=0, \quad n=0,1,2,..., \\
y(0)=K_{0}-K_{v,per}(0).
\end{array}\right.
\end{equation}
Obviously, $y$ goes to $0$ as $t$ goes to $+\infty$. Thus, we deduce
\begin{proposition}
\label{asymptoticK}
$K_v$ solution of (\ref{Impulsive}) converges globally asymptotically to $K_{v,per}$
\end{proposition}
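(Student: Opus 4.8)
The plan is to reduce the claim to the scalar linear behaviour already isolated in the lines preceding the statement. By Proposition~\ref{propPeriodiqueK}, when $H\equiv H_0$ the function $K_{v,per}$ is itself a solution of the impulsive problem~(\ref{Impulsive}); so I would subtract the two copies of~(\ref{Impulsive}) satisfied respectively by $K_v$ and by $K_{v,per}$. On every interval between consecutive intervention times both obey the affine relaxation law $\dot z=r_K(K_{\max}-z)$, whose homogeneous part (in $z$) is $\dot z=-r_Kz$; and at each instant $t_0+n\tau$ both are multiplied by the same factor $1-\gamma(H_0)$. Consequently $y:=K_v-K_{v,per}$ solves a homogeneous scalar linear impulsive Cauchy problem, namely $\dot y=-r_Ky$ away from the impulses, with initial value $y(0)=K_0-K_{v,per}(0)$ and no additive impulsive forcing --- this is the system displayed just above the statement.

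I would then integrate it. On $[t_0+n\tau,\,t_0+(n+1)\tau[$ one has $y(t)=y\bigl((t_0+n\tau)^+\bigr)\,e^{-r_K(t-t_0-n\tau)}$, while across an impulse $y$ is at most multiplied by the contracting factor $1-\gamma(H_0)\le 1$; splicing the pieces together gives the uniform estimate
\begin{equation}
\bigl|K_v(t)-K_{v,per}(t)\bigr|=\bigl|y(t)\bigr|\ \le\ \bigl|K_0-K_{v,per}(0)\bigr|\,e^{-r_Kt}\ \xrightarrow[\,t\to+\infty\,]{}\ 0 ,
\end{equation}
since $r_K>0$. Because this bound holds for an arbitrary initial datum $K_0$ and is independent of the phase of $K_{v,per}$ within its period, the convergence is global (indeed exponential), which is exactly the assertion of Proposition~\ref{asymptoticK}.

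At this stage there is no substantive obstacle: all the real work lies in the reduction, and the only delicate point is a bookkeeping one --- being consistent about the one-sided limits at the impulse times $t_0+n\tau$, so that the subtraction genuinely cancels the impulsive terms and leaves a purely continuous-time linear equation for $y$. Should one wish to bypass even the explicit periodic formula of Proposition~\ref{propPeriodiqueK}, an equivalent route is to introduce the time-$\tau$ stroboscopic (Poincar\'{e}) map $P$ of~(\ref{Impulsive}), $P:K_v(t_0+n\tau)\mapsto K_v(t_0+(n+1)\tau)$: it is affine with slope $(1-\gamma(H_0))\,e^{-r_K\tau}\in(0,1)$, hence a strict contraction with a unique fixed point, and interpolating continuously between its iterates delivers the same global convergence to $K_{v,per}$.
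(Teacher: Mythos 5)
Your proof is correct and follows essentially the same route as the paper: set $y=K_v-K_{v,per}$, observe that it obeys a decaying linear impulsive equation, and conclude that it tends to $0$. Your bookkeeping at the impulse times is in fact slightly more accurate than the paper's displayed system, which writes $\Delta y(t_0+n\tau)=0$ whereas the jump is really the contractive multiplication $y\bigl((t_0+n\tau)^+\bigr)=(1-\gamma(H_0))\,y(t_0+n\tau)$ --- a correction that only strengthens the exponential estimate you derive.
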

Set 
\begin{equation}
C=r_K\dfrac{\gamma(H_0)}{1-e^{-r_K\tau}}.
\end{equation}
From the previous propositions, we can show
\begin{proposition}
Assume that $H$ converges to $H_0$. If $\mathcal{N}$, $\gamma(H_0)$ and $\tau$ are choosen such that
\begin{itemize}
\item 
\begin{equation}
\mathcal{N}> \left(1+\frac{\mu_v}{\nu_L}\dfrac{1}{1-\gamma(H_0)}C\right)\left(1+\frac{1}{\nu_L+\mu_L}\dfrac{1}{1-\gamma(H_0)}C\right),
\label{condPer}
\end{equation}
then (\ref{systemA_N})-(\ref{Impulsive}) admits a unique periodic equilibrium which attracts all initial conditions in $\mathcal{K}$.
 \item 
\begin{equation}
\mathcal{N}\leq  \left(1+\frac{\mu_v}{\nu_L}C\right)\left(1+\frac{1}{\nu_L+\mu_L}C\right),\label{condZero}
\end{equation}
  then the solution of system (\ref{systemA_N})-(\ref{Impulsive}) converges to the trivial equilibrium $\mathbf{0}$. 
\end{itemize}
\end{proposition}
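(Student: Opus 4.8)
The plan is to obtain this statement as a consequence of Propositions~\ref{propo1}, \ref{propPeriodiqueK} and~\ref{asymptoticK}, treating (\ref{systemA_N})--(\ref{Impulsive}) as an asymptotically $\tau$-periodic perturbation and staying throughout within the class of cooperative systems so that comparison arguments remain available. Since $H(t)\to H_0$, the jump law in~(\ref{Impulsive}) converges to the one with constant coefficient $\gamma(H_0)$, and together with Proposition~\ref{asymptoticK} this gives $K_v(t)-K_{v,per}(t)\to 0$, exponentially at rate $r_K$. Hence for every $\varepsilon>0$ there is $T_\varepsilon$ with $\underline K-\varepsilon\le K_v(t)\le\overline K+\varepsilon$ for $t\ge T_\varepsilon$, where, reading off Proposition~\ref{propPeriodiqueK}, $\underline K$ is the value of $K_{v,per}$ just after an impulse and $\overline K$ the value reached just before the next one; an elementary computation using the definition of $C$ gives $\overline K/\underline K=1/(1-\gamma(H_0))$ and shows that the dilution rate $r_K\!\left(K_{\max}/K_{v,per}(t)-1\right)$ ranges over $[\,Ce^{-r_K\tau},\,C/(1-\gamma(H_0))\,]$. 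This is the mechanism by which the constant $C$ and the factor $1/(1-\gamma(H_0))$ enter conditions~(\ref{condPer})--(\ref{condZero}).

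The second step is to trade the time-varying carrying capacity for a constant one at the price of an extra linear loss. Setting $\ell=L_v/K_v$ and $a=A_v/K_v$, between impulses $(\ell,a)$ solves a cooperative planar system in which $K_v$ no longer appears in a logistic term but instead contributes a loss $r_K(K_{\max}/K_v-1)$ to each equation, while at the impulse instants $(\ell,a)$ is multiplied by $1/(1-\gamma(H(t_0+n\tau)))\to 1/(1-\gamma(H_0))$. Freezing this loss at one or the other end of the interval above and the kick at $1/(1-\gamma(H_0))$ produces two \emph{autonomous impulsive} cooperative systems which, after multiplying back by $K_v\in[\underline K,\overline K]$, bound the solution $(L_v,A_v)$ of (\ref{systemA_N})--(\ref{Impulsive}) above and below for $t\ge T_\varepsilon$. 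Each comparison system has the algebraic form of~(\ref{systemA_N}) with shifted vital rates and an extra per-period multiplicative kick, so its behavior is dictated by the threshold of Proposition~\ref{propo1}; matching the effective basic offspring number of the bounding systems — read from formula~(\ref{N}) with the shifted parameters and the kick incorporated — against $\mathcal N$ produces exactly the inequalities~(\ref{condPer}) and~(\ref{condZero}). Consequently~(\ref{condZero}) forces the solution to be dominated by a subcritical comparison system, whence $(L_v,A_v)\to\mathbf 0$, while~(\ref{condPer}) keeps it bounded below by a supercritical one, so $(L_v,A_v)$ is eventually confined to a compact subset of the open positive quadrant.

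In the first case it remains to upgrade ``eventually confined'' to convergence to a \emph{unique} periodic equilibrium. For the frozen problem $H\equiv H_0$, the time-$\tau$ (Poincaré) map of (\ref{systemA_N})--(\ref{Impulsive}) is dissipative and eventually strongly monotone, and its fixed point $\mathbf 0$ is, by~(\ref{condPer}), unstable; the order-theoretic results for such maps~\cite{smith_monotone_2008} (using the sublinearity inherited from the logistic nonlinearity) then give a unique positive fixed point attracting every initial datum in $\mathcal K$ off the axes, which corresponds to the asserted periodic equilibrium. The passage from $H\equiv H_0$ to $H(t)\to H_0$ is absorbed, once more, by the $\varepsilon$-sandwich of the first step and the robustness of the attractor of a strongly monotone map under asymptotically vanishing perturbations.

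The step I expect to be the main obstacle is the legitimacy of the comparison across impulses: the bounding systems lose the quasimonotonicity (type-K) property on $\{\ell>1\}$, the region entered right after each impulse, where the factor $1-\ell$ is negative, so the comparison principle cannot be invoked naively there; one must either restrict the comparison to an invariant sub-box the trajectory enters after finitely many periods, or combine the impulsive comparison principle of~\cite{bainov_impulsive_1993} with the monotone-map machinery so as to tolerate this sign change. It is this bookkeeping, rather than any single estimate, that fixes the (sufficient, not necessarily sharp) thresholds~(\ref{condPer})--(\ref{condZero}).
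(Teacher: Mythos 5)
Your preliminary reductions coincide with the paper's: convergence of $K_v$ to the periodic carrying capacity $K_{v,per}$ (Propositions \ref{propPeriodiqueK} and \ref{asymptoticK}), normalization of $(L_v,A_v)$ by the carrying capacity, and the correct identification of the extremal values $Ce^{-r_K\tau}$ and $C/(1-\gamma(H_0))$ of the dilution rate $r_K\left(K_{\max}/K_{v,per}-1\right)$ --- which is indeed where the constants in (\ref{condPer})--(\ref{condZero}) come from. From there, however, the paper takes a different and more direct route: after the change of variables (\ref{changement_variable}) it regards the normalized system as a $\tau$-periodic \emph{cooperative system with concave nonlinearity} and invokes \cite[Theorem 5.5]{Jiang_1993} (Theorem \ref{top}), so that (\ref{condZero}) and (\ref{condPer}) drop out as sign conditions on the principal minors (in practice, the determinants) of $-\overline{A}$ and $-\underline{A}$, the extremal linearizations at $\mathbf{0}$ over one period. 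No autonomous impulsive comparison system and no Poincar\'e-map fixed-point argument is needed.

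The genuine gap in your version is the central quantitative step, which is asserted rather than carried out: you claim that the frozen impulsive comparison systems have an ``effective basic offspring number'' which, matched against $\mathcal N$, ``produces exactly'' (\ref{condPer}) and (\ref{condZero}). It does not. Once the per-period multiplicative kick $1/(1-\gamma(H_0))$ is kept explicit --- and it must be, since $\ell=L_v/K_v$ jumps whenever $K_v$ does --- the extinction/persistence threshold of each comparison system is a Floquet condition, namely whether the spectral radius of $\frac{1}{1-\gamma(H_0)}e^{\overline{A}\tau}$ exceeds $1$, and this does not reduce to an algebraic inequality of the form $\mathcal N\le\left(1+\alpha C\right)\left(1+\beta C\right)$. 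Worse, the integral of the dilution loss over one period equals $\ln\left(\overline{K}/\underline{K}\right)=-\ln\left(1-\gamma(H_0)\right)$ (with $\overline{K}$, $\underline{K}$ the extreme values of $K_{v,per}$), i.e.\ it exactly offsets the kick; so freezing the loss at its minimum while retaining the kick yields an upper comparison system strictly harder to make subcritical than (\ref{condZero}) permits, and the argument would deliver different (stronger) hypotheses than the ones stated. Comparing in the original variables against a constant carrying capacity fares no better, since the threshold of Proposition \ref{propo1} is $\mathcal N\le 1$ independently of $K_v$ and therefore cannot reach (\ref{condZero}) with $\mathcal N>1$: the extinction mechanism here is intrinsically non-autonomous. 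Your closing remark about the loss of cooperativity on $\{\ell>1\}$ just after an impulse does identify a real difficulty (one the paper itself glosses over when it asserts $(u,v)\in[0,1]^2$), but flagging it is not resolving it. To obtain the stated inequalities you need the route through Theorem \ref{top}, where the time-dependence of $A(t)=D_xF(t,0)$ is handled wholesale by the concavity structure rather than period by period.
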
 

\begin{proof}
Using proposition \ref{propPeriodiqueK}, we know that $K_v$ converges to a periodic equilibrium. Thus assuming that $t$ is sufficiently large, system (\ref{systemA_N}) becomes a periodic cooperative system of period $\tau$:
 \begin{equation}
\left\{ \begin{array}{l}
\dfrac{dL_{v}}{dt}=rbA_{v}\left(1-\dfrac{L_{v}}{K_{v,per}(t)}\right)-\left(\nu_{L}+\mu_{L}\right)L_{v},\\
\dfrac{dA_{v}}{dt}=\nu_{L}L_{v}-\mu_{v}A_{v}.
\end{array}\right.
\label{systemA_N_Per}
\end{equation}
The previous system is not really practical to handle, in particular because the maximal larvae capacity is now time dependant. To circumvent this difficulty, we make the following change of variables:
\begin{equation}
u=\frac{L_v}{K_{v,per}} \qquad \mbox{and} \qquad v=\dfrac{\mu_v}{\nu_L}\frac{A_v}{K_{v,per}},
\label{changement_variable}
\end{equation}
such that $(u,v)\in [0,1]^2$ and
$$
\begin{array}{ll}
\dfrac{du}{dt}&=\dfrac{1}{K_{v,per}} \dfrac{dL_{v}}{dt}-u\times \dfrac{1}{K_{v,per}} \dfrac{dK_{v,per}}{dt}\\
&=rb\dfrac{\nu_L}{\mu_v}v(1-u)-\left(\nu_{L}+\mu_{L}+r_K\left(\dfrac{K_{\max}}{K_{v,per}}-1\right) \right) u
\end{array}
$$
and
$$
\begin{array}{ll}
\dfrac{dv}{dt}&=\left(\dfrac{1}{K_{v,per}} \dfrac{dA_{v}}{dt}-v\times \dfrac{1}{K_{v,per}} \dfrac{dK_{v,per}}{dt}\right) \\
&=\nu_L \left( u-\left(1+\dfrac{\mu_v}{\nu_L}r_K\left(\dfrac{K_{\max}}{K_{v,per}}-1\right) \right) v\right),
\end{array}
$$
which leads to the system
 \begin{equation}
\left\{ \begin{array}{l}
\dfrac{du}{dt}=rb\dfrac{\nu_L}{\mu_v}v(1-u)-\left(\nu_{L}+\mu_{L}+r_K\left(\dfrac{K_{\max}}{K_{v,per}}-1\right) \right) u,\\
\dfrac{dv}{dt}=\nu_L\left( u-\left(1+\dfrac{\mu_v}{\nu_L}r_K\left(\dfrac{K_{\max}}{K_{v,per}}-1\right) \right) v\right).
\end{array}\right.
\label{systemA_N_PerBIS}
\end{equation}
In fact system (\ref{systemA_N_Per}) enters the family of periodic cooperative systems with a concave nonlinearity \cite{smith_cooperative_1986, Jiang_1993}. So, let us consider a periodic cooperative system of $n$ differential equations 
\begin{equation}
\dot{x}=F(t,x),
\label{Fgeneral}
\end{equation}
with concave nonlinearities. Since $A(t)$ is a $n\times n$ continuous matrice in $\mathbb{R}$, $\tau$-periodic in $t$, we denote
$$
\overline{a}_{ij}=\max_{0\leq t \leq \tau} a_{ij}(t), \quad \underline{a}_{ij}=\min_{0 \leq t\leq \tau} a_{ij}(t)
$$
and set
$$
\overline{A}=(\overline{a}_{ij}) \qquad \underline{A}=(\underline{a}_{ij}).
$$
Thus 
$$
\underline{A} \leq A(t) \leq \overline{A} \quad \mbox{for}\quad 0\leq t \leq  \tau.
$$
Set $p$ a positive real. For reader's convenience, let us recall \cite[Theorem 5.5, page 230]{Jiang_1993}.

\begin{theorem}
\label{top}
Let $F(t,x)$ be continuous in $\mathbb{R}\times [0,p]^2$, $\tau$-periodic in $t$ for a fixed $x$ and assume $D_xF(t,x)$ exists and is continuous in $\mathbb{R}\times [0,p]^2$. Assume that all solutions are bounded in $[0,1]^2$ and $F(t,0) =0$. Assume
$$
\frac{\partial F_i}{\partial x_j}\geq 0, \qquad (t,x)\in \mathbb{R}\times [0,p]^2,
$$
and $A(t)=D_xF(t,0)$ is irreducible for any $t\in \mathbb{R}$,
$$
(C)\qquad \mbox{if }0<x<y, \qquad \mbox{then } \qquad D_xF(t,x)>D_xF(t,y).
$$
Then
\begin{enumerate} 
\item If all principal minors of $-\overline{A}$ are nonnegative, then $\lim_{t\rightarrow \infty} x(t)=0$ for every solution of (\ref{Fgeneral}) in $[0,p]^2$.
\item If $-\underline{A}$ has at least one negative principal minor, then (\ref{Fgeneral}) possesses a unique positive $\tau$-periodic solution which attracts all initial conditions in $]0,p]^2$.
\end{enumerate}
\end{theorem}
In our case, we have $p=1$ and
$$
F(t,x)=\left(\begin{array}{ll}
rb\dfrac{\nu_L}{\mu_v}x_2\left(1-x_1\right)-\left(\nu_{L}+\mu_{L}+r_K\left(\dfrac{K_{\max}}{K_{v,per}}-1\right) \right)x_1 \\
\nu_L\left( x_1-\left(1+\dfrac{\mu_v}{\nu_L}r_K\left(\dfrac{K_{\max}}{K_{v,per}}-1\right) \right) x_2\right)
\end{array}
\right).
$$
Clearly $F$ is continuous and $\tau$-periodic. Moreover
$$
\partial_xF(t,x)=\left(\begin{array}{cc}
-\left(rb\dfrac{\nu_L}{\mu_v}x_2+\nu_{L}+\mu_{L}+r_K\left(\dfrac{K_{\max}}{K_{v,per}}-1\right) \right) & rb\dfrac{\nu_L}{\mu_v}\left(1-x_1\right) \\
\nu_L  & -\nu_L \left(1+\dfrac{\mu_v}{\nu_L}r_K\left(\dfrac{K_{\max}}{K_{v,per}}-1\right) \right) 
\end{array}
\right).
$$
From which we deduce that $\dfrac{\partial F_i}{\partial x_j}\geq 0$ for all $(x_1,x_2)\in [0,1]^2$. When $0<x<y$, we have obviously
$$
\partial_xF(t,y)<\partial_xF(t,x).
$$
Finally
$$
A=\partial_xF(t,0)=\left(\begin{array}{cc}
-\left(\nu_{L}+\mu_{L}+r_K\left(\dfrac{K_{\max}}{K_{v,per}}-1\right) \right) & rb\dfrac{\nu_L}{\mu_v} \\
\nu_L  & -\nu_L \left(1+\dfrac{\mu_v}{\nu_L}r_K\left(\dfrac{K_{\max}}{K_{v,per}}-1\right)
\right) \end{array}
\right).
$$
Clearly, $A$ is irreducible for all $t$. Using Proposition (\ref{propPeriodiqueK}), we have
$$
 \dfrac{(1-\gamma(H_0))(1-e^{-r_K\tau})}{1-(1-\gamma(H_0))e^{-r_K\tau}}K_{\max} \leq K_{v,per}(t) \leq \dfrac{1-e^{-r_K\tau}}{1-(1-\gamma(H_0))e^{-r_K\tau}}K_{\max}.
$$ 
Thus, we deduce
$$
-\underline{A}=\left(\begin{array}{cc}
\left(\nu_{L}+\mu_{L}+r_K\dfrac{\gamma(H_0)}{(1-\gamma(H_0))(1-e^{-r_K\tau})} \right) & -rb\dfrac{\nu_L}{\mu_v} \\
-\nu_L  & \nu_L \left(1+\dfrac{\mu_v}{\nu_L}r_K\dfrac{\gamma(H_0)}{(1-\gamma(H_0))(1-e^{-r_K\tau})}
\right) \end{array}
\right)
$$
and
$$
-\overline{A}=\left(\begin{array}{cc}
\left(\nu_{L}+\mu_{L}+r_K\dfrac{\gamma(H_0)e^{-r_K\tau}}{1-e^{-r_K\tau}} \right) & -rb\dfrac{\nu_L}{\mu_v} \\
-\nu_L  & \nu_L \left(1+\dfrac{\mu_v}{\nu_L}r_K\dfrac{\gamma(H_0)e^{-r_K\tau}}{1-e^{-r_K\tau}}
\right) \end{array}
\right)
$$
First, according to Theorem \ref{top}$_1$, we have to study all principal minors of $-\overline{A}$. The diagonal terms are positive, and a straighforward computation show that $det(-\overline{A})\leq 0$ if following inequality is verified
$$
\mathcal{N}\leq  \left(1+\frac{\mu_v}{\nu_L}e^{-r_K\tau}C\right)\left(1+\frac{1}{\nu_L+\mu_L}e^{-r_K\tau} C\right).
$$
Following Theorem \ref{top}$_2$, since all diagonal terms of $-\underline{A}$ are positive, we need to verify that $det(-\underline{A})<0$. A straigforward computation shows that we have to verify
$$
\mathcal{N}> \left(1+\frac{\mu_v}{\nu_L}\dfrac{1}{1-\gamma(H_0)}C\right)\left(1+\frac{1}{\nu_L+\mu_L}\dfrac{1}{1-\gamma(H_0)}C \right).
$$
Thus, applying Theorem \ref{top}, using the change of variables (\ref{changement_variable}), and the fact that $K_{v,per}$ is a $\tau$- periodic function, we deduce the desired results.
\end{proof}

\subsection{The Model with Individual behaviors}
Now, we aim to define $H$. Taking into account that we have individuals that will make mechanical control, $m=1$, and individuals that will not make mechanical control, $m=0$, we only consider two status for people: being bitten (or bothered) by mosquitoes, $\sigma(b)=b$, and not being bitten (or bothered), $\sigma(b)=nb$. Then, the probability of being bitten, at least once, without mechanical control, is:
		 \begin{equation} \label{eq:proba0}
\pi=P(b(m)=1/m=0).
        \end{equation}
Similarly, we can define the following probabilities:
		 \begin{equation} \label{eq:proba1}
\left\{ \begin{array}{c} 
P(b(m)=0/m=0)=1-\pi,\\
P(nb(m)=1/m=1)=1.
\end{array}\right.
        \end{equation}
\begin{remark}
\label{remarkpiqure}
For the numerical simulations, we have to define $\pi$, the probability of being bitten, at least $k$ times. We will consider a Poisson law, $\mathcal{P}(\lambda)$, where $\lambda$ depend on the number of mosquitoes per human, i.e. $\lambda=\dfrac{A_{v}}{N_h}$. Thus
$$
\pi=P(X\geq k)=1-F_{P}(k,\lambda),
$$
where $F_P$ is the cumulative Poisson distribution, $k$, the number of bites, is choosen according to the level of tolerance of the population to mosquito bites.
\end{remark}

\bigskip

Thus, taking into account that mechanical control has a direct cost (spend a couple of hours per week to clean the garden) and an indirect cost (pay people to make mechanical control: in general people paid by a public agency, and thus indirectly by taxes), we have the following maximization program:
		 \begin{equation}
max_{m}E[u(\sigma(m))]-\kappa W m,
          \end{equation}
where $u(b)$ and $u(nb)$  are the utility levels attached to the bitten status, with $0<u(b)<u(nb)$. $W$ is the marginal utility and is supposed to be constant, and $\kappa$  is the cost of mechanical control. $\kappa$ may depend on two variables $s_{l}$, the level of local interventions (by people themselves) and $s_{e}$, the level of external interventions (by ARS agents), that is:

\begin{equation}
\left\{ \begin{array}{c}
\kappa(s_{l},s_{e})=\kappa_{0}s_{l}+\kappa_{1}s_{e},\\
\mbox{where }s_{e}\in\left\{ 0,1\right\} ,
\end{array}\right.
\label{costfunction}
\end{equation}
$\kappa_0$ and $\kappa_1$ are constants related to the cost of each intervention. 


Therefore, an individual will partake in mechanical control if $\kappa W$ is lower than the expected utility loss related with the risk of being bitten without mechanical control:
\begin{equation}
E\left[u(\sigma(1)-u(\sigma(0))\right]\geq\kappa W
\end{equation}

The expected utility can be estimated using (\ref{eq:proba0}) and (\ref{eq:proba1}). Altogether, an individual will use mechanical control if $\pi$   is above a certain threshold, that is:
\begin{equation}
\pi\ge\frac{\kappa_0s_lW}{u(nb)-u(b)} \qquad \mbox{mechanical control is utilized only by local people, i.e. $s_e=0$}
\label{mlonly}
\end{equation}
and
\begin{equation}
\pi\ge\frac{\left(\kappa_0s_l+\kappa_1\right)W}{u(nb)-u(b)} \qquad \mbox{mechanical control is utilized by local people and ARS agents, i.e. $s_e=1$.}
\label{mlandars}
\end{equation}
Thus, from (\ref{mlonly}), we deduce $\mathcal{S}_l$, the threshold related to $s_l$:
\begin{equation}
\mathcal{S}_l=G_0^{-1}(\pi)=\dfrac{1}{\kappa_0}\left(\dfrac{\pi\left(u(nb)-u(b)\right)}{W}\right)
\end{equation}
Therefore, using  (\ref{mlandars}), $\mathcal{S}_{l}$ becomes:
\begin{equation}
\mathcal{S}_l=G_1^{-1}(\pi)=\dfrac{1}{\kappa_0}\left(\dfrac{\pi\left(u(nb)-u(b)\right)}{W}-\kappa_1\right)
\end{equation}

Consequently, assuming that $f$ is the probability density function of $s$, the proportion of people that will finally use mechanical control can be defined as follows:
\begin{equation}
H=\int_{\mathcal{S}_m}^{+\infty}f(s)ds=1-F(G_k^{-1}(\pi)),
\label{H}
\end{equation}

where F is the cumulative distribution function related to f, and $k\in\{0,1\}$.

Thus, knowing $H$, we can solve system (\ref{Impulsive}). In particular, following the previous propositions above, we infer:

\begin{proposition}
Assume $\mathcal{N}>1$, and $H$ converges to $H_0$.
\begin{itemize}
\item If (\ref{condPer}) is true, then system (\ref{systemA_N})-(\ref{Impulsive}) will converge to a positive periodic equilibrium.
\item If (\ref{condZero}) is true, then system (\ref{systemA_N})-(\ref{Impulsive}) will converge to the zero equilibrium.
\end{itemize}
\end{proposition}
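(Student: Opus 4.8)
The statement is, in essence, the Proposition proved just above, now read along the closed loop in which $H$ is determined by the behavioural rule (\ref{H}); so the plan is to reduce to that Proposition. Recall that (\ref{H}) prescribes $H$ as a function of $\pi$, hence (through Remark \ref{remarkpiqure}) as a function of the adult load $A_v$, so along any solution of the coupled system (\ref{systemA_N})--(\ref{Impulsive}) the numbers $H(t_0+n\tau)$ form a well-defined sequence in $[0,1]$; by hypothesis it converges to $H_0$. Granting this, the only channel through which $H$ acts is the impulsive carrying-capacity equation (\ref{Impulsive}), and the first --- and main --- task is to show that Propositions \ref{propPeriodiqueK}--\ref{asymptoticK} survive the weakening from ``$H\equiv H_0$'' to ``$H(t_0+n\tau)\to H_0$'', i.e. that $K_v(t)-K_{v,per}(t)\to 0$ uniformly on $\mathbb{R}_+$, where $K_{v,per}$ is the periodic profile built from $\gamma(H_0)$.

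To see this, I would integrate $\dot K_v=r_K(K_{\max}-K_v)$ over $[t_0+n\tau,t_0+(n+1)\tau)$ and insert the impulse, obtaining, with $x_n:=K_v((t_0+n\tau)^+)$ and $\gamma_n:=\gamma(H(t_0+n\tau))$, the affine recursion $x_{n+1}=(1-\gamma_{n+1})\bigl(e^{-r_K\tau}x_n+(1-e^{-r_K\tau})K_{\max}\bigr)$ together with the explicit interpolation on each interval. The contraction factor $(1-\gamma_{n+1})e^{-r_K\tau}$ is $\le e^{-r_K\tau}<1$ uniformly in $n$, and the coefficients converge (to $(1-\gamma(H_0))e^{-r_K\tau}$ and $(1-\gamma(H_0))(1-e^{-r_K\tau})K_{\max}$), so the standard argument for asymptotically constant affine recursions gives $x_n\to x^{*}$, the fixed point that, fed into the interpolation formula, reproduces exactly the $K_{v,per}$ of Proposition \ref{propPeriodiqueK}; the interpolation formula then upgrades this to $\sup_t|K_v(t)-K_{v,per}(t)|\to 0$. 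Equivalently and more robustly one squeezes: given $\varepsilon>0$ pick $N$ with $\gamma(H_0)-\varepsilon\le\gamma_n\le\gamma(H_0)+\varepsilon$ for $n\ge N$; since the solution of (\ref{Impulsive}) is monotone non-increasing in the impulse amplitudes and the scalar flow between impulses is order-preserving, $K_v(t)$ is, for $t$ large, trapped between the two $\tau$-periodic profiles associated with $\gamma(H_0)\mp\varepsilon$, and these collapse onto $K_{v,per}$ as $\varepsilon\to0$.

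With $K_v(t)\to K_{v,per}(t)$ in hand I would transfer the conclusion to the full system by a second comparison, this time on (\ref{systemA_N}) directly: its right-hand side is non-decreasing in the carrying capacity (indeed $\partial_{K_v}\bigl(rbA_v(1-L_v/K_v)\bigr)=rbA_vL_v/K_v^{2}\ge 0$) and cooperative in $(L_v,A_v)$, so from $K_{v,per}^{-,\varepsilon}(t)\le K_v(t)\le K_{v,per}^{+,\varepsilon}(t)$ (valid for $t$ large) the actual $(L_v,A_v)$ is sandwiched between the solutions of (\ref{systemA_N}) driven by the two $\tau$-periodic carrying capacities $K_{v,per}^{\pm,\varepsilon}$ obtained from $\gamma(H_0)\mp\varepsilon$. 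Each bounding system is a $\tau$-periodic cooperative system with concave nonlinearity, and, exactly as in the proof of the previous Proposition, the change of variables (\ref{changement_variable}) (with $K_{v,per}^{\pm,\varepsilon}$ in place of $K_{v,per}$) puts it in the form (\ref{systemA_N_PerBIS}), where Theorem \ref{top} applies. Under (\ref{condPer}) the determinant $\det(-\underline A)$ is negative at $\gamma(H_0)$ and continuous in $\gamma$, so for $\varepsilon$ small both bounding systems fall under case $2$ of Theorem \ref{top}; their attracting positive $\tau$-periodic solutions converge, as $\varepsilon\to0$, to the positive periodic equilibrium of the limiting system, which squeezes $(L_v,A_v)$ onto it (undoing (\ref{changement_variable}) is legitimate since $K_{v,per}$ is bounded away from $0$). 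Under (\ref{condZero}), taken as a strict inequality, the relevant principal-minor condition on $-\overline A$ holds at $\gamma(H_0)$ and, by continuity in $\gamma$, also for the largest carrying capacity in the tube (smallest $\gamma$) when $\varepsilon$ is small, so the upper bounding system falls under case $1$ and its solution tends to $\mathbf{0}$; being nonnegative and dominated by it, $(L_v,A_v)\to\mathbf{0}$.

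The main obstacle is precisely this passage from ``$H$ constant'' to ``$H$ asymptotically constant'': one must verify that the monotone/comparison machinery is compatible with the perturbation --- that the scalar impulsive equation depends monotonically on the impulse amplitudes, that (\ref{systemA_N}) depends monotonically on the carrying capacity, and that the thresholds of Theorem \ref{top} (encoded by $\det(-\underline A)$ and by the principal minors of $-\overline A$) are stable under small changes of $\gamma(H_0)$, which uses their continuity in $\gamma$ together with the strictness of (\ref{condPer})/(\ref{condZero}). A minor point is the boundary subtlety in case $2$ of Theorem \ref{top} (attraction only on $]0,p]^2$): it is handled by noting that along any solution issued from $\mathcal{K}\setminus\{\mathbf{0}\}$ the rescaled variables are positive for every $t>0$, since neither $\{L_v=0\}$ nor $\{A_v=0\}$ is invariant under (\ref{systemA_N}). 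One could instead invoke a general theorem on asymptotically $\tau$-periodic monotone semiflows to effect the transfer at once, but the squeeze above keeps the argument within the tools already established in the paper.
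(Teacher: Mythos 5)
Your proposal is correct, and it does substantially more than the paper, which in fact offers no proof of this proposition at all: after defining $H$ via (\ref{H}) the authors simply state that the result is ``inferred'' from the preceding propositions, implicitly treating the hypothesis $H(t)\to H_0$ as if it were the already-settled case $H(t)\equiv H_0$. What you supply is precisely the missing bridge: first, that the impulsive recursion for $K_v$ with asymptotically constant impulse amplitudes $\gamma_n\to\gamma(H_0)$ still converges uniformly to the periodic profile $K_{v,per}$ of Proposition \ref{propPeriodiqueK} (an affine contraction with asymptotically constant coefficients, or equivalently a squeeze between the profiles for $\gamma(H_0)\mp\varepsilon$); and second, the transfer to (\ref{systemA_N}) by comparison with the $\tau$-periodic cooperative systems driven by those two profiles, each handled by Theorem \ref{top} after the change of variables (\ref{changement_variable}), using monotonicity of the vector field in $K_v$ and continuity of $\det(-\underline{A})$ and of the principal minors of $-\overline{A}$ in $\gamma$. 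Each ingredient is sound and stays within the paper's toolbox, so this is the argument the paper should contain. Two of your side remarks deserve emphasis: the extinction case does require (\ref{condZero}) to be read strictly for the perturbation to close (the paper states it non-strictly, and moreover the condition actually derived in the earlier proof carries an extra factor $e^{-r_K\tau}$ that (\ref{condZero}) as displayed does not, so the boundary case is not covered by either argument); and your positivity remark for solutions issued from $\mathcal{K}\setminus\{\mathbf{0}\}$ handles the restriction of Theorem \ref{top}, case 2, to $]0,p]^2$, which the paper also leaves implicit. The one point you inherit from the paper without comment is the tacit assumption that $L_v\leq K_v$ persists across the impulses (needed for cooperativity and for $(u,v)\in[0,1]^2$), but that gap is the paper's, not yours.
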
  

\section{Numerical simulations}
\label{sec:4}

We now derive some simulations to better illustrate the model of the previous section and discuss some possible scenarios. 

Let us first explain the parameter values taken in our simulations.
The relationship between utility and income takes the form of a constant relative risk aversion utility function. Thus, the marginal utility of income, $W$, is given by \cite{layard_marginal_2008}:
\begin{equation}
W=e^{-\beta\log\left(Income\right)},
\end{equation}
where $\beta=1.2$, $Income=103$ euros, the daily mean salary of Réunion island\footnote{See \href{http://www.insee.fr/fr/themes/document.asp?ref_id=9865}{INSEE website} accessed July 2014.}. 
We also choose $u_{c}$ as a constant following \cite{halasa_quantifying_2014}:

\begin{equation}
u(nb)-u(b)=u_{c} \in [0,1],
\end{equation}

In particular, we will consider three different values for $u_c$, $0.3$, $0.6$ and $0.9$. Indeed, in \cite{halasa_quantifying_2014}, the authors use the EuroQol states trade-off (EuroQol-STO) to derive the mosquito-abundance-utility score (ranging between 0 and 1) by allowing residents to elicit preferences between alternative health states, instead of time, and living an average day with mosquitoes. They find that living an average day with mosquitoes in their yard and porch during the summer of 2010 was worse than living an average day with the specified comparator health conditions and diseases. The average (SD) utility based on EuroQol-STO was $0.87 \pm 0.03$, corresponding to a utility loss of $0.13$. For the cost constants $\kappa_i$, used in (\ref{costfunction}), we consider $\kappa_0=14.8$\footnote{The hourly mean salary in the considered region.}, and $\kappa_1=0$ (fully subsidized), $50$, and $100$ respectively. Finally, we compute $H$, using (\ref{H}), with $f$ following a normal distribution $\mathcal{N}(0,\dfrac{u_c}{W\kappa_0})$.

The larvae capacity recovery rate $r_{K}$ can vary throughout the year because it is generally related to environmental parameters, like rainfall. For simplicity, we consider $r_{K}$ to be constant. 

We assume that individuals implement mechanical control every week. Seventy days (10 weeks) after the start of the mechanical control, there is a field intervention from ARS such that $50\%$ of the larval capacity is destroyed in addition to the decay obtained by individuals. The ARS intervention has a cost, so more people react and partake in mechanical control during the external intervention.

According to \cite{dufourd_impact_2013}, we choose the following values for the mosquito biological parameters: $r\times b=5$, $\nu_{L}=1/15$ , $\mu_{L}=0.01$ , $\mu_{v}=0.05$, $r_K=0.05$, $K_{max}=2\times 10^6$. As initial vector we choose $(L(0),A(0),K(0))=(20000,20000,20000)$. These parameters reflect realistic conditions in Réunion.

The numerical simulations are provided using the nonstandard finite difference method, which has showed to be very effective to solve ODES systems in mathematical epidemiology and population dynamics (see for instance \cite{anguelov_nonstandard_2012, anguelov2013, anguelov_dynamically_2014} and references therein). Nonstandard methods are also suitable to solve impulsive differential equations too \cite{Dumont2015}

\begin{figure}[hbtp]
\centering
	\includegraphics[width=0.75\textwidth]{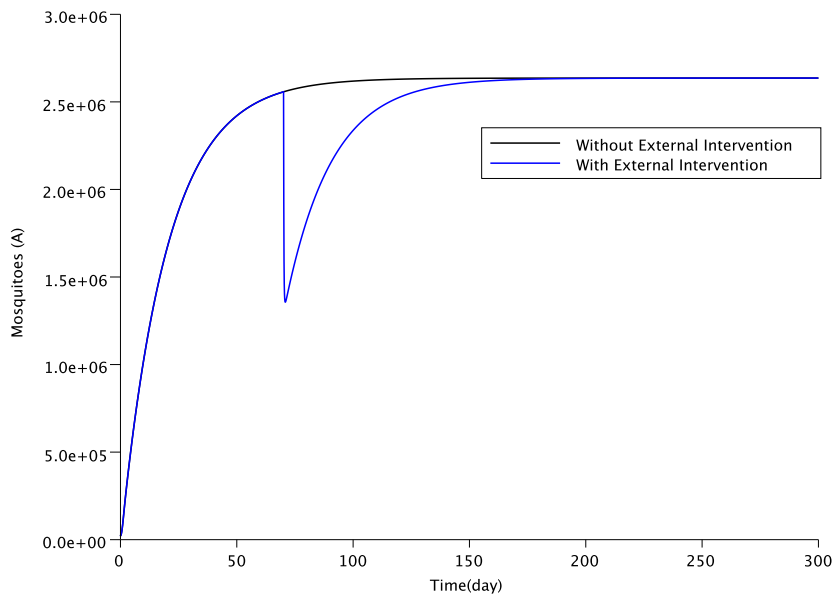} 
\caption{Baseline Scenario - Evolution of the mosquito population (with - blue line - and without - black line - External intervention  at day 70; local people don't yet make mechanical control)}
\label{fig1}
\end{figure}

In Figure \ref{fig1}, we show the dynamics of the adult mosquito with and without an external intervention. Note that the model excludes human behaviors. We start with an initially small number of larvae and adults to take into account the dynamics between September-October, when the mosquito population is small, and March-April, when the population is large. This transition is important: we start with $A_v(0)/N_h=0.1$ to $A_v(0)/N_h \approx 10$ at equilibrium, where $N_h$ is the Human population. Notably, the external intervention reduces the number of mosquitoes for only a couple of weeks. Afterward, the solution converges to the positive equilibrium. This indicates that destroying breeding sites should be regularly undertaken and maintained. 

In the following simulations, we illustrate our predictions when endogenous human behavior is incorporated in the model and consider different scenarios. As numerical simulations are globally robust for every sub-case of each scenario, we only provide a closer look at the middle sub-case of each scenario for illustration (the cost of the intervention is estimated as 53 euros per household).
\begin{itemize}

\item In scenario 1 (Figures \ref{scenario1a}, \ref{scenario1b}, \ref{scenario1c}), we consider different utility values $u_c$ ($0.3$, $0.6$, $0.9$) and different costs for the external intervention $\kappa_e$, i.e. $0$, $50$ and $100$ euros per individual (or house) treated. We also assume that people do not change their level of tolerance to mosquito bites. For instance, we may assume a mean tolerance threshold to mosquito bites equal to $3$ bites per day, that is $k=3$ (see remark \ref{remarkpiqure}).
\begin{figure}[hbtp]
\begin{center}
	\includegraphics[width=0.9\textwidth]{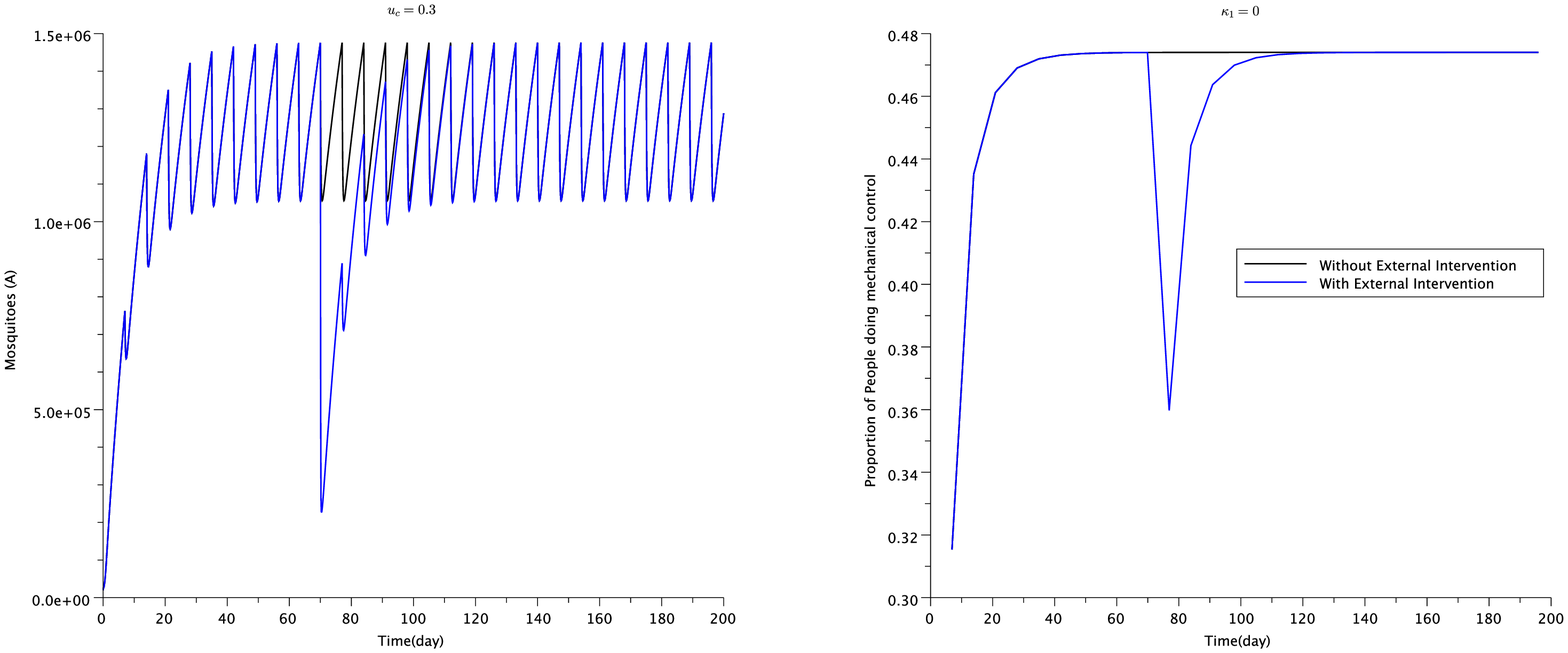} 
		\includegraphics[width=0.9\textwidth]{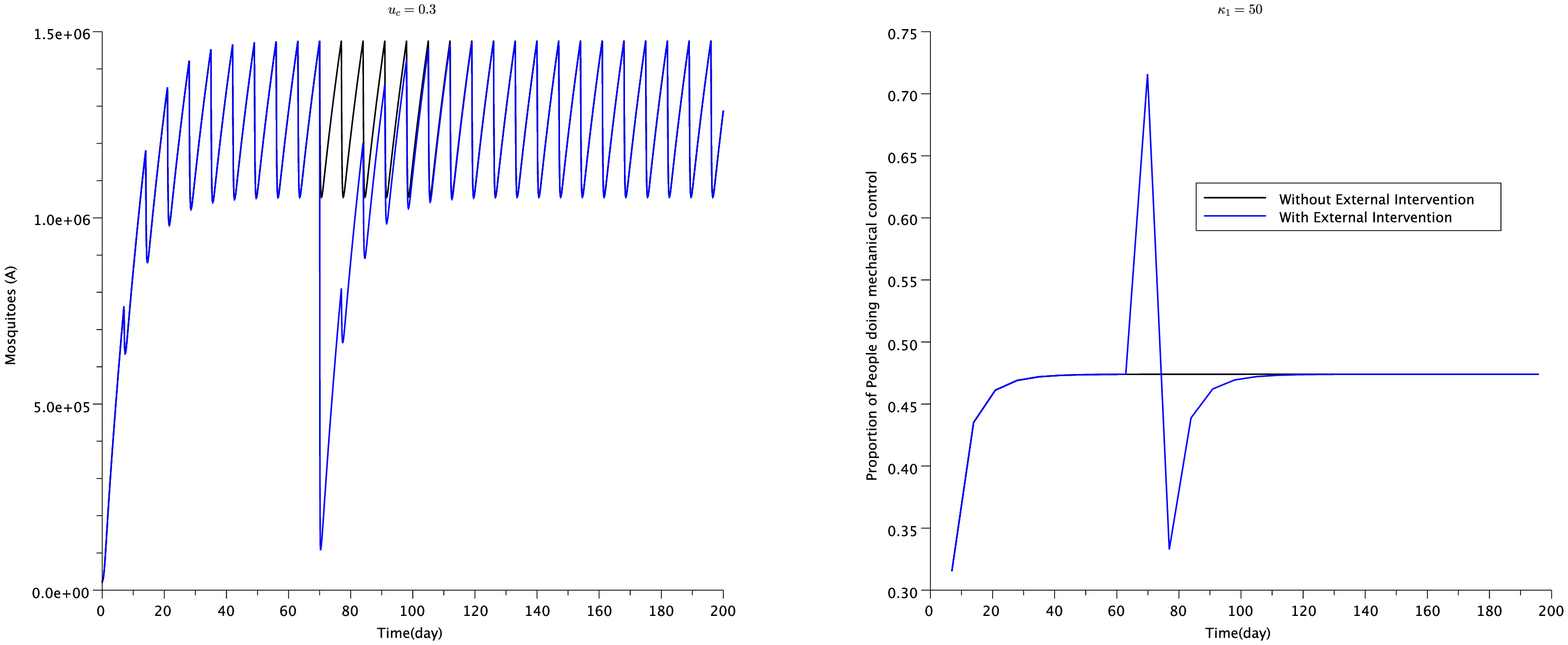} 
		\includegraphics[width=0.9\textwidth]{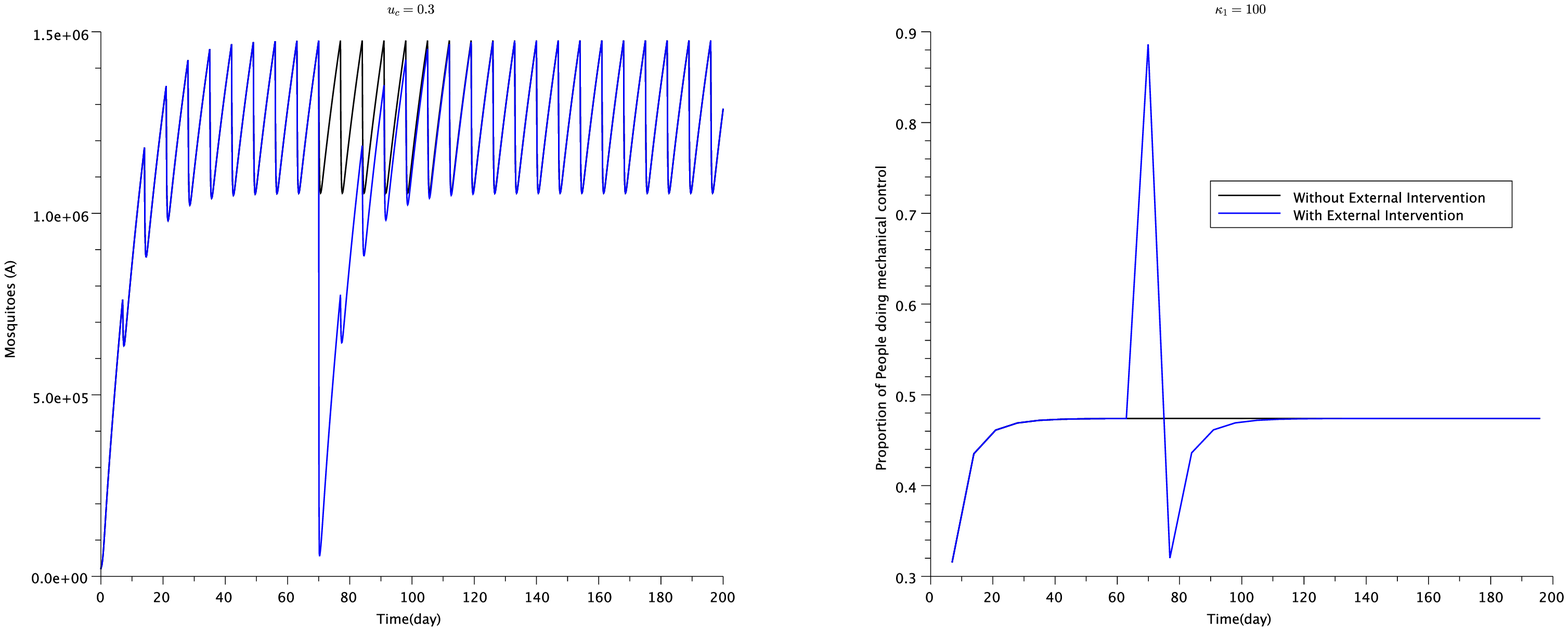} 
	
\end{center}
\caption{Scenario 1a. We consider the utility value $u_c=0.3$ with different costs for the external intervention $\kappa_e$, i.e. $0$, $50$ and $100$ euros per individual (or house) treated are considered.}
	\label{scenario1a}
\end{figure}
\begin{figure}[hbtp]
\begin{center}
	\includegraphics[width=0.9\textwidth]{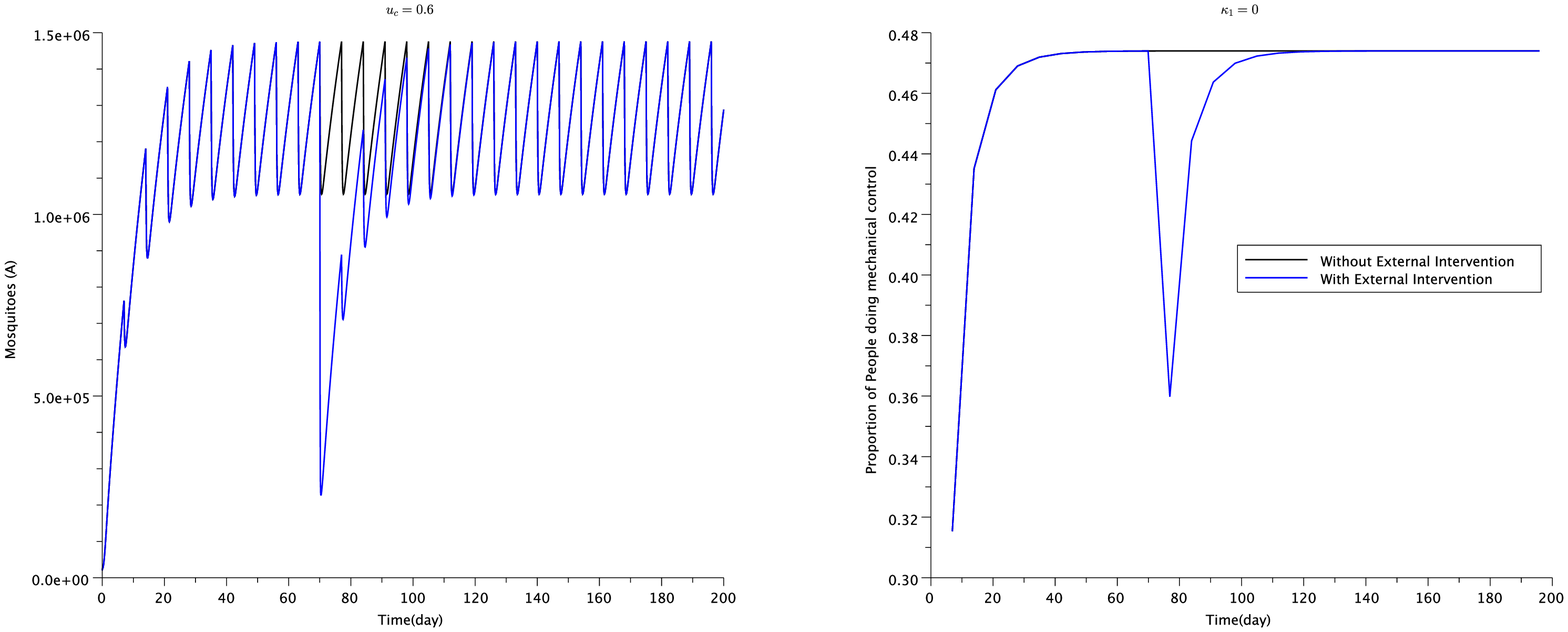} 
		\includegraphics[width=0.9\textwidth]{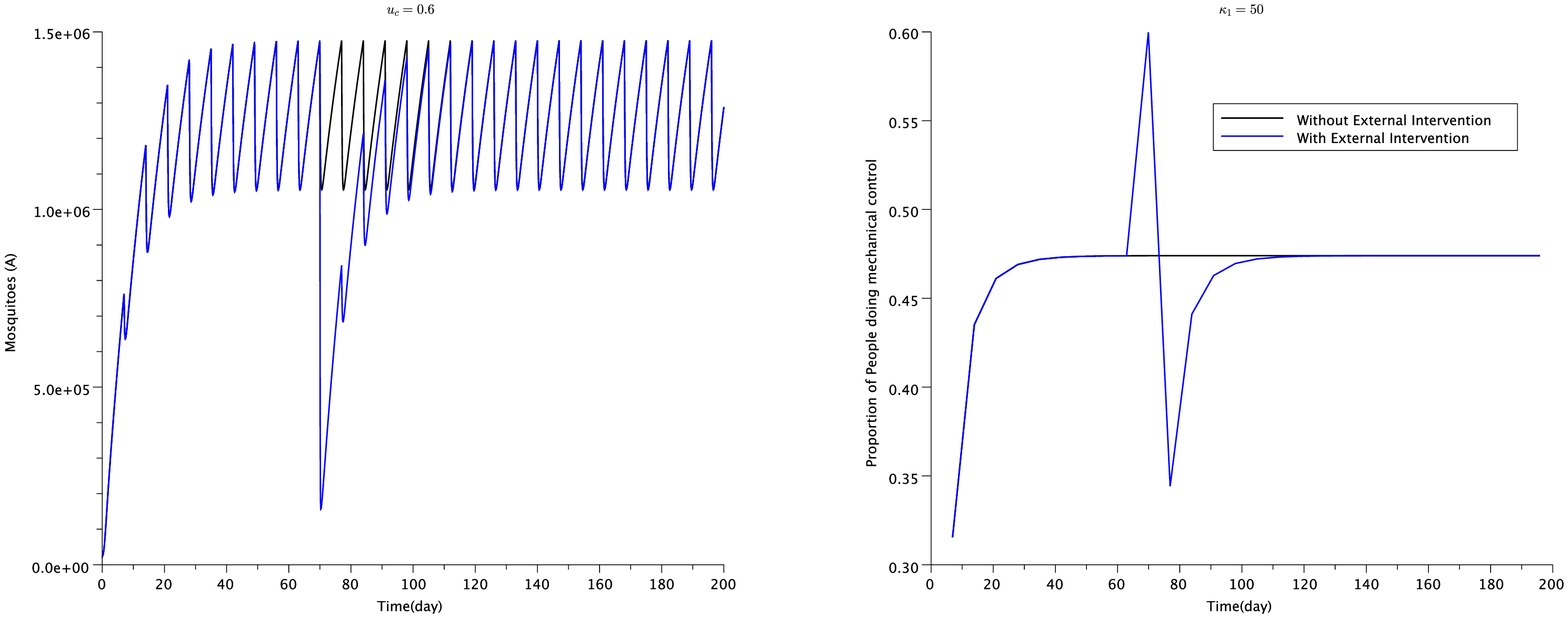} 
		\includegraphics[width=0.9\textwidth]{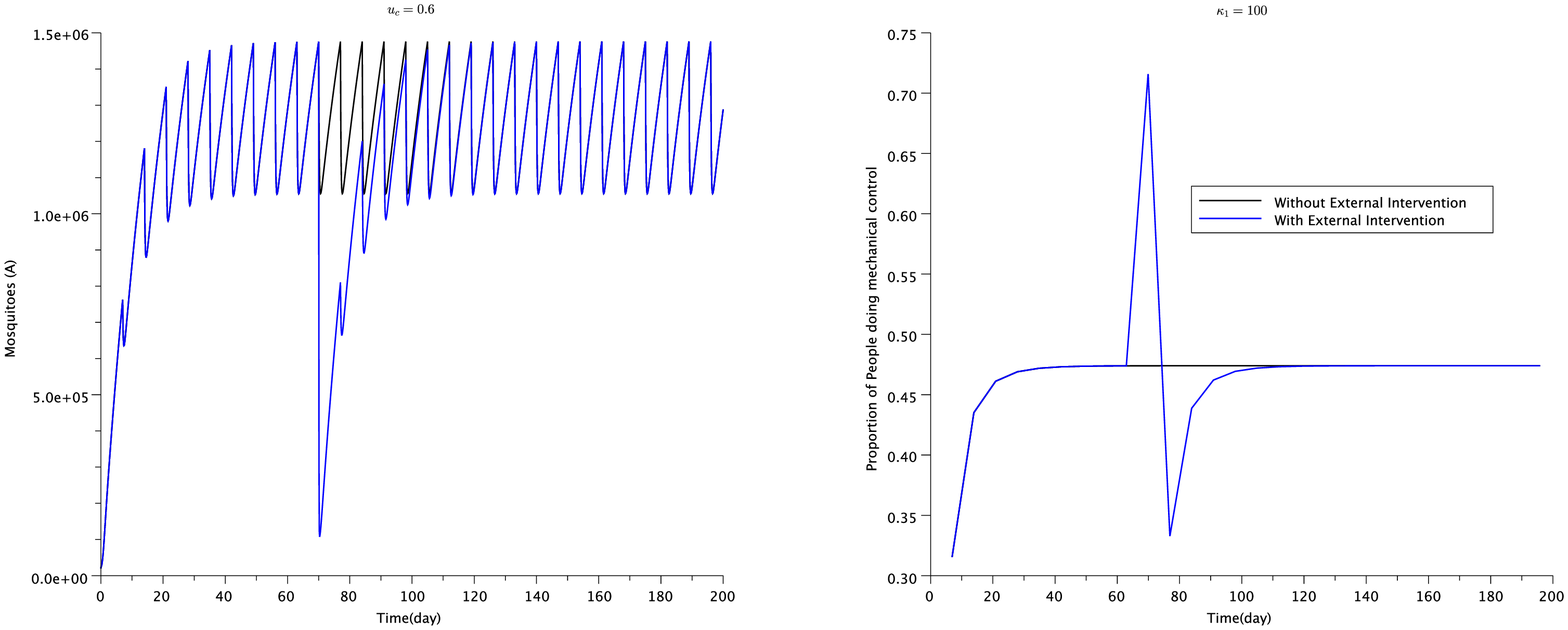} 
	
\end{center}
\caption{Scenario 1b. We consider the utility value $u_c=0.6$ with different costs for the external intervention $\kappa_e$, i.e. $0$, $50$ and $100$ euros per individual (or house) treated are considered.}
	\label{scenario1b}
\end{figure}
\begin{figure}[hbtp]
\begin{center}
	\includegraphics[width=0.9\textwidth]{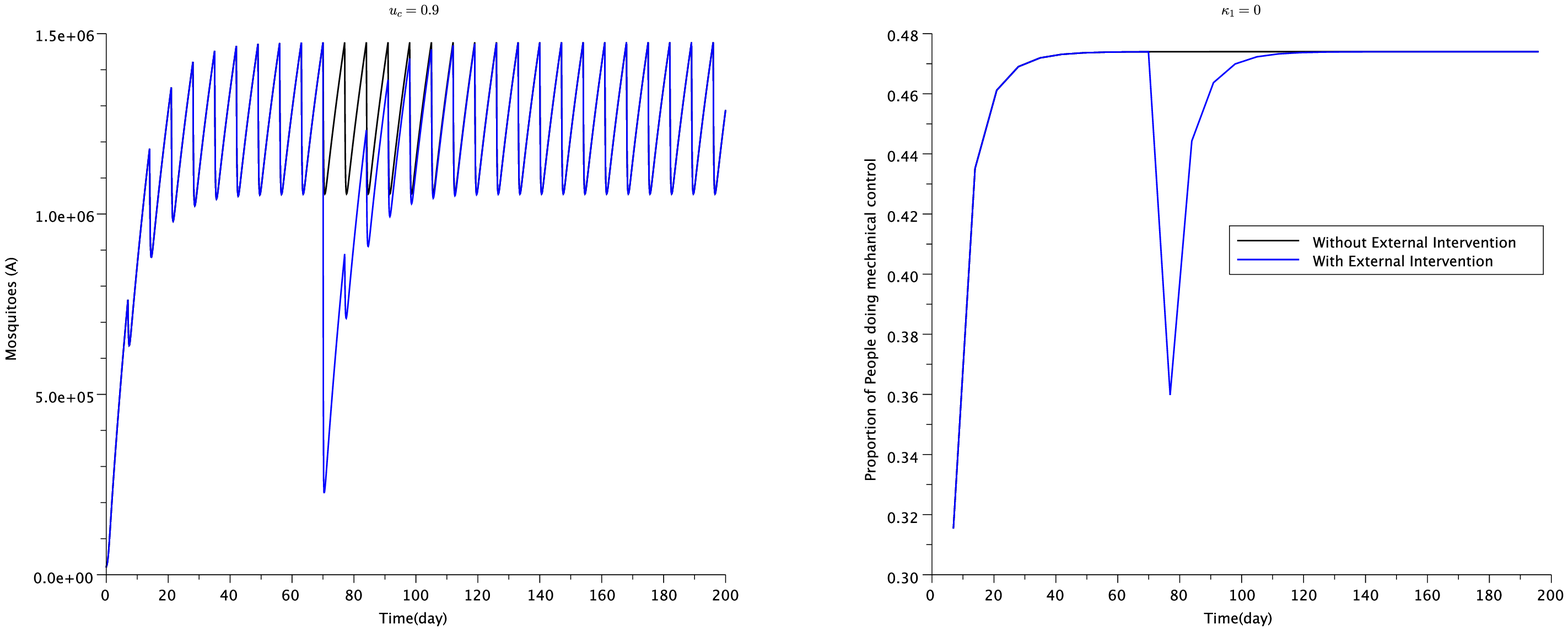} 
		\includegraphics[width=0.9\textwidth]{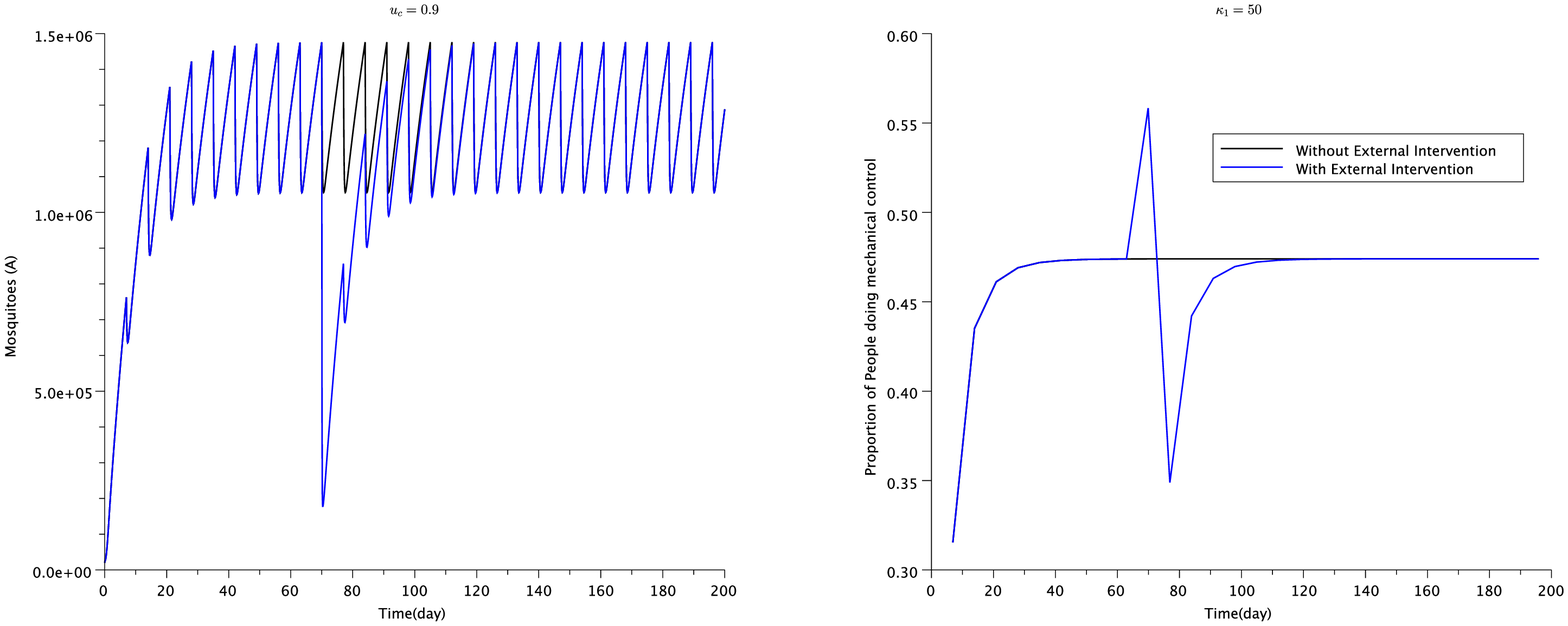} 
		\includegraphics[width=0.9\textwidth]{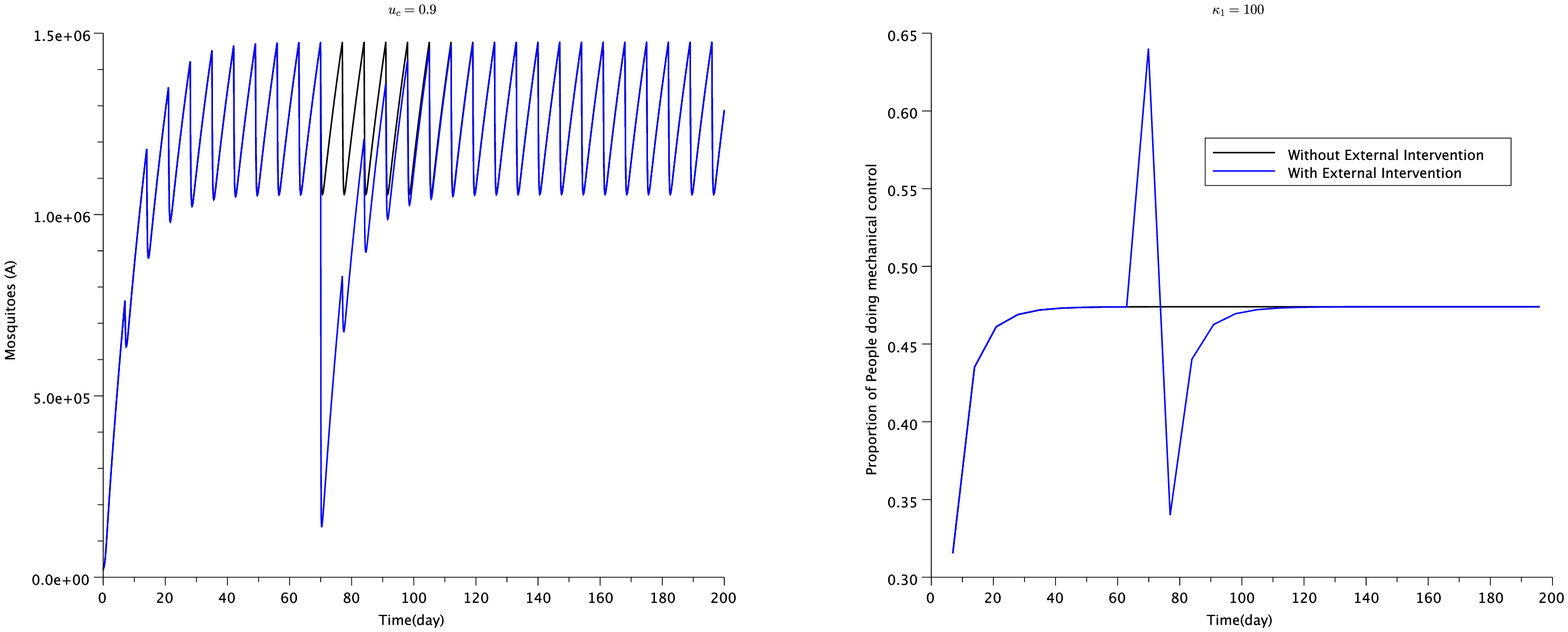} 
	
\end{center}
\caption{Scenario 1c. We consider the utility value $u_c=0.9$ with different costs for the external intervention $\kappa_e$, i.e. $0$, $50$ and $100$ euros per individual (or house) treated are considered.}
	\label{scenario1c}
\end{figure}
Whatever the value taken by $u_c$, the cost of the local intervention may impact $H$: the larger $\kappa_e$, the larger $H$. Altogether, the external intervention has an impact that last only for a couple of weeks, before the system reaches again the periodic equilibrium. 
\item 
In scenario 2 (Figure \ref{scenario2a}, we consider $u_c=50$ and $\kappa_e=60$. We assume a psychological effect of the intervention (such as a placebo effect for a clinical treatment), that is, the level of tolerance to mosquito bites is higher after the external intervention due to a psychological effect of the intervention. In other words, we shift the bites tolerance thresholds $k$ from $3$ to $6$, $9$ and $12$ (see remark \ref{remarkpiqure}) after the external intervention. Due to this effect, less and less people perform mechanical control themselves (which explains lower values for $H$ with external intervention than without). This implies automatically that the mosquito population shifts to a new periodic equilibrium, larger than before the external intervention, with oscillations of smaller amplitude.

\begin{figure}[hbtp]
\begin{center}
		\includegraphics[width=0.9\textwidth]{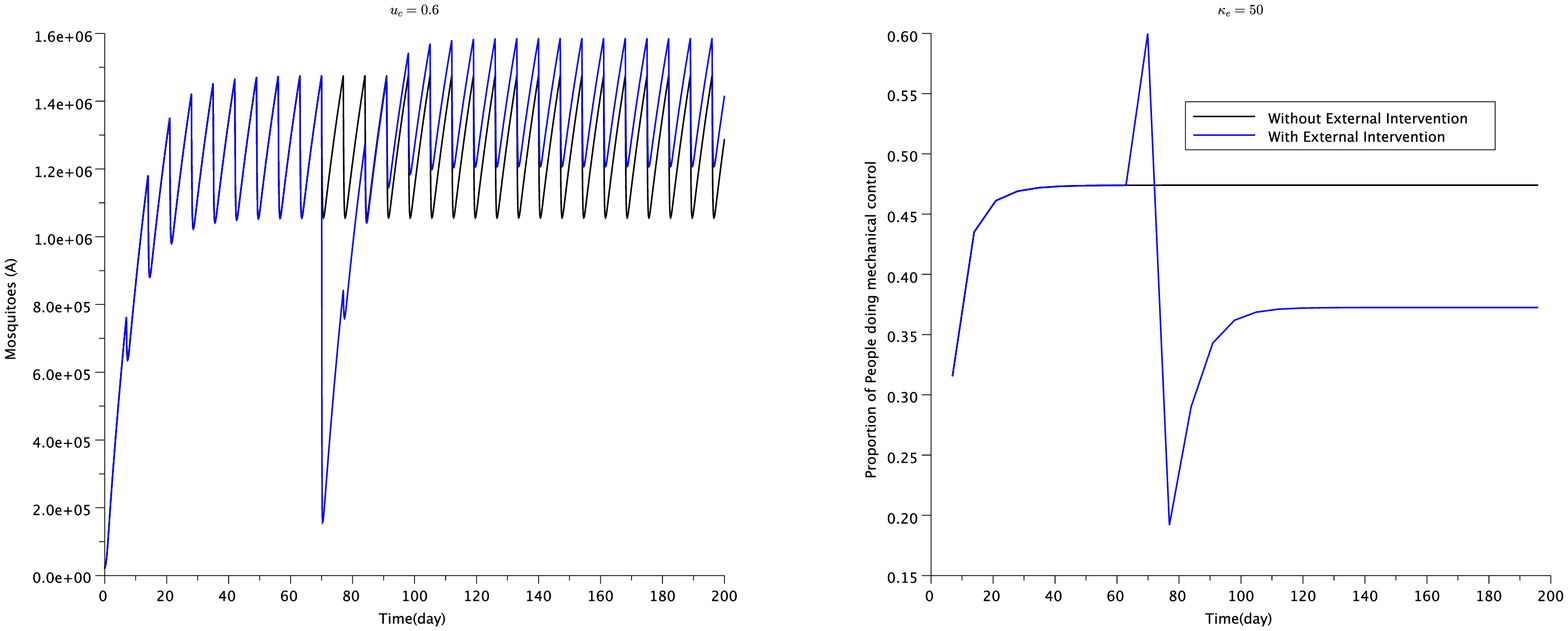} 
			\includegraphics[width=0.9\textwidth]{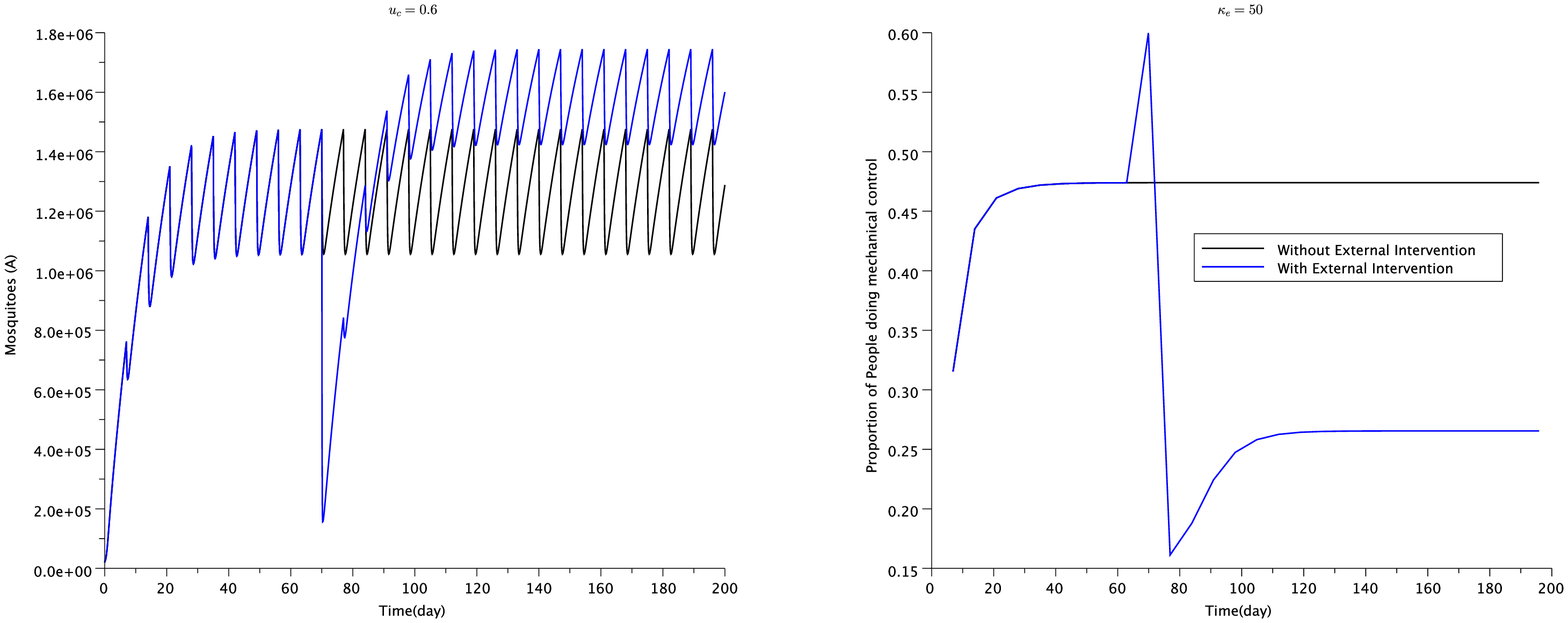} 
			\includegraphics[width=0.9\textwidth]{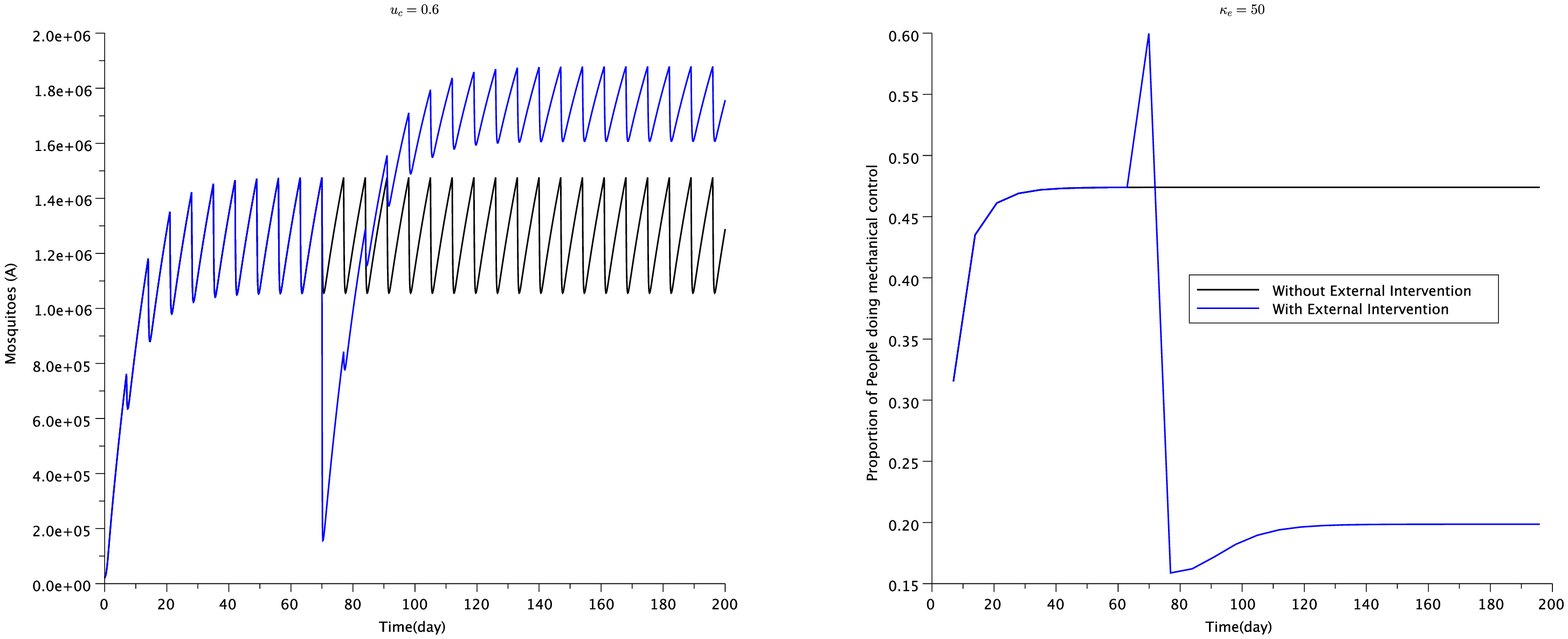}  
	\end{center}
		\caption{ Scenario 2. Different bites tolerance thresholds $k$ ($6$, $9$ and $12$)
		with an utility value $u_c=0.6$, and a cost for the external intervention $\kappa_e=50$ euros per individual (or house) treated.} 
			\label{scenario2a}
\end{figure}

\item In scenario 3 (Figure \ref{scenario3}, page \pageref{scenario3}), we consider $u_c=0.6$ and $\kappa_e=50$. We assume that people increase the larval capacity growth rate ($r_{K}=0.06$ after intervention) such that the periodic equilibrium is larger than without external intervention. For instance people may clear themselves of the responsibility to clean their larval gites because of the action made by the public agency. However, since the tolerance against mosquito bites does not change, $H_e$ is larger than $H$.
\begin{figure}[hbtp]
\centering
	\includegraphics[width=0.9\textwidth]{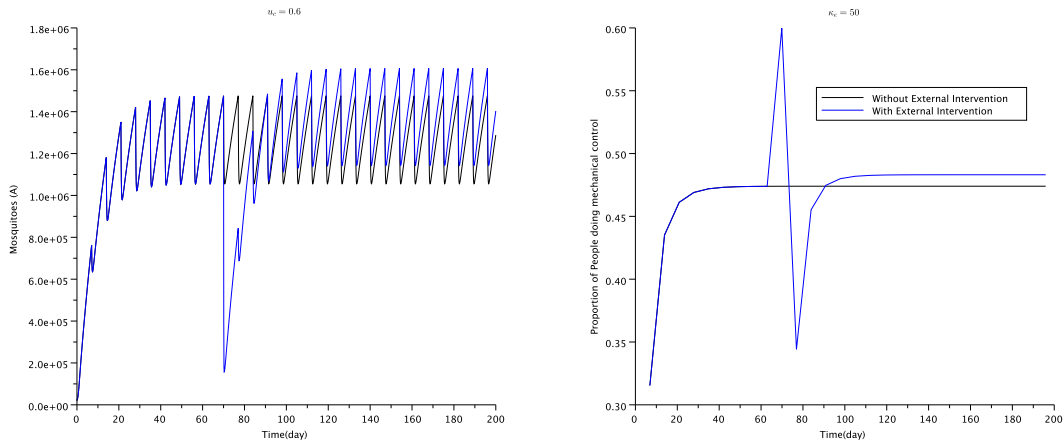} 
	\caption{Scenario 3.  ($u_c=0.6$ and $\kappa_e=50$) - Evolution of the mosquito population and proportion of people that will do mechanical control (with and without external intervention at day 70; local people make mechanical control every 7 days)}
		\label{scenario3}
\end{figure}

\item Scenario 4 (Figure \ref{scenario4}, page \pageref{scenario4}) is a mixed of the scenarii 2 and 3: we assume that the tolerance $k$ increase from $3$ to $6$, and $r_K$ increase from $0.05$ to $0.06$. Then, as expected, the mosquito population larger than before the external intervention. However we have $H_e <H$.

\begin{figure}[hbtp]
\centering
	\includegraphics[width=0.9\textwidth]{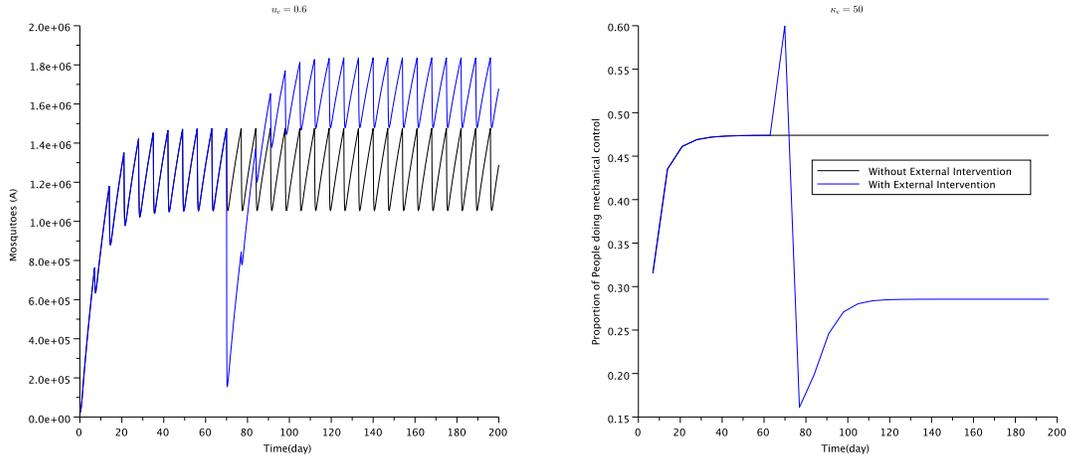} 
	\caption{Scenario 4.  ($u_c=0.6$ and $\kappa_e=50$) - Evolution of the mosquito population and Proportion of people that will do mechanical control (with and without external intervention at day 70; local people make mechanical control every 7 days)} when $r_K$ and $k$ increase.
		\label{scenario4}
	
\end{figure}

\end{itemize}
Also, it would be possible to consider $r_K$ and $K_{\max}$ (as well as the mosquito parameters) as periodic functions (according to dry and we seasons) or general function related to temperature and humidity data, the results obtained by numerical simulations in the previous scenarii still remain (or even can be worse!).
\section{Conclusion} \label{sec:5}
Our study aims to contribute to the understanding of public policy aspects of health. Though the public benefits of community-based healthcare strategies may be high, the immediate private returns of private behaviors could be low. Achieving widespread adoption of accurate preventive behaviors, then, might require costly sustained subsidies or taxation \cite{arrow_uncertainty_1963, gersovitz_tax/subsidy_2005}. In addition, public action may generate perverse incentives. Our model can highlight several scenarii that may be used to reveal household behaviors in response to a public intervention and give policy recommendations. Notably, an external public health intervention may increase stegomya and entomological indices in the presence of endogenous protective behaviors and, as a consequence,  public health interventions may have perverse effects. This may happen if the intervention induces a placebo effect, which is highly probable for many public health interventions involving human interactions, or if individuals clear themselves of the responsibility to clean their garden. The predictions of the model could be tested easily and a randomized experiment has been conducted recently for this purpose. Preliminary results in Réunion island seem to confirm this assertion. Altogether, human (individual and group) behaviors are important components that need to be taken into account in control Modelling. Our work bring some insights in this direction.

In \cite{dumont_mathematical_2012}, where Sterile Insect Technique (SIT) models for \textit{Aedes albopictus} was developed and studied, it was proved that coupling SIT with Mechanical Control perform (far) better results than without. According to the present work, it seems necessary to reevaluate these results, and to take into account the impact of human behaviors.

\vspace{1cm}

\noindent \textbf{Acknowledgments.}
YD has been partly supported by the SIT phase 2 pilot program in R\'eunion, jointly funded by the French Ministry of Health and the European Regional Development Fund (ERDF) under the 2014-2020 Operational Programme.

\bibliography{Biblio}


\end{document}